\DeclareMathOperator{\Ker}{Ker }
\newtheorem{theorem}{Theorem}[section]
\newtheorem{lemma}[theorem]{Lemma}
\newtheorem{prop}[theorem]{Proposition}
\newtheorem{defn}[theorem]{Definition}
\newtheorem{corr}[theorem]{Corollary}
\begin{document}

\begin{center}
{\Large\bfseries
Bases for counting functions \\ on free monoids and groups
\par}

\vspace{0.7cm}

\textbf{\large Petr Kiyashko}
\end{center}

\begin{abstract}
   The focus of this paper is providing
   a description of the spaces of counting functions on
   free monoids and groups and of the Brooks space.
   These results have been originally obtained in \cite{ggd}, however
   one of the theorems at the time of writing this paper contained a
   slight error, and we correct it here.
   We also propose alternative, simpler proofs to the main theorems from \cite{ggd}.
\end{abstract}

\section{Introduction}
\paragraph{}
Let $S = \{a_1, a_2, \dots a_n\}$ 
be a finite set of letters with $n \geq 2$. 
We define the free monoid of rank $n$ as the set of all finite 
words over the alphabet $S$
(including the empty word, denoted by 
$\epsilon$), and denote it by $M_n$. We denote by
$|w|$ the length of the word 
$w \in M_n$, and by $w_i$ the $i$-th 
letter of the word, 
with $1$ being the first index (sometimes the $i$-th word of a set of words
may also be denoted by $w_i$, but it is 
always clear from the context which meaning we are referring to).
We denote by $w_{fin}$ the last letter of $w$ (i. e. $w_{|w|}$) 
and by $w_{fin-i}$ we
denote the letter $w_{|w|-i}$.
A word $v$ is a subword of another word $w$ if for some letter index
$i \leq |w| - |v| + 1$ it holds that 
$v = w_i w_{i+1}\ldots w_{i + |v| - 1}$. For any word $w \in M_n$
and $k \geq 0$
we define the $k$-th power of $w$ as the word obtained by concatenating $k$ copies of $w$ and denote it by $w^k$.

\paragraph{}
Similarly, let $\bar{S} = \{a_i, a_i^{-1} \mid a_i \in S\}$,
and we define
the free group of rank $n$ as the set of all reduced words over the 
alphabet $\bar{S}$, and denote it by $F_n$.
A word $w$ is called reduced if for all 
$1 \leq i \leq n$ the words $a_ia_i^{-1}$ and $a_i^{-1}a_i$ are not
subwords of $w$. The empty word is also included in $F_n$
by definition. The length, the $i$-th letter of a reduced word,
the subword relation between reduced words in $F_n$
are defined similarly to the monoid case. For the
case of a reduced word $w \in F_n$ with $w_1 \neq w_{fin}^{-1}$
the $k$-th power of $w$ denoted by $w^k$ is also
defined similarly to the monoid case. We define the inverse of
a reduced word $w$ as the word
$w^{-1} =w_{fin}^{-1}w_{fin-1}^{-1}\ldots w_1^{-1}$.

\paragraph{}
Let us define the function $p_v(w)$ for $v, w \in M_n$ as the 
number of occurrences of $v$ in $w$, i. e. the number of distinct indices
$i$ such that $v = w_i w_{i+1}\ldots w_{i + |v| - 1}$. This 
function will be referred to as the 
elementary $v$-counting function. 
For the free group $F_n$ the definition of elementary counting functions is 
virtually identical, with an additional requirement that the words 
$v$ and $w$ are necessarily reduced. We extend the definition
to the empty word as $p_{\epsilon}(w) = |w|$ in both cases.

Now, in the monoid case
we define a counting function $f$ over a field
$\Re \in \{\mathbb{Q}, \mathbb{R}, \mathbb{C}\}$ 
as an
arbitrary linear combination of elementary counting functions
with coefficients from $\Re$, i. e.
\begin{equation} \label{eq:cdef}
f = \sum_{i=1}^{m}x_ip_{w_i}, \ x_i \in \Re \setminus \{0\}, \ w_i \in M_n.
\end{equation}
The functions $p_w$ map $M_n$ to $\mathbb{N}$,
which in turn can be naturally embedded in $\Re$, thus if we consider the
elementary counting functions to be maps from
$M_n$ to $\Re$, then such linear combinations are valid. 
Let us denote by $C(M_n)$ the vector space of all 
counting functions. The counting functions for $F_n$
and the space $C(F_n)$ are defined similarly.
For a representation  of some counting function $f$ of the form (\ref{eq:cdef}) we define
its depth as the maximum of $|w_i|$ over all words $w_i$.
Now, for any function $f \in C(M_n)$ ($f \in C(F_n)$) 
we define the depth of $f$ as the minimal depth over all the representations of $f$.
It will be proven further that if the depth of $f$ is non-zero it has only one 
representation of the form (\ref{eq:cdef}).

\paragraph{}
We consider two counting functions $f_1, f_2$ equivalent 
($f_1 \sim f_2$) if their
difference is bounded, i. e. $||f_1 - f_2||_{\infty} < \infty$.
Note that this relation is linear, thus we may define a quotient space.
We denote by
$\widehat{f}$ the equivalency class of $f \in C(M_n)$ ($f \in C(F_n)$).
We denote by $K(M_n)$ (respectively $K(F_n)$) the set of all functions $f$ such that
$\widehat{f} = \widehat{0}$, i. e.
the kernel of factorization by the relation $\sim$, and by $\widehat{C}(M_n)$ 
(respectively $\widehat{C}(F_n)$) we denote the vector space of classes of functions 
$\widehat{f}$ for all $f \in C(M_n)$ ($f \in C(F_n)$),
i. e. the image of factorization by the relation $\sim$.

\paragraph{}
Finally, let us define the Brooks space.
A Brooks quasimorphism is defined as $$\phi_w = p_w - p_{w^{-1}}$$
for some reduced word $w \in F_n$ with $|w| \geq 1$. We define the 
Brooks space $Br(F_n)$ as the space of all linear combinations 
of Brooks quasimorphisms with coefficients from $\Re$. Two elements
of the Brooks space are considered equivalent ($\sim$) if their difference is bounded.
The quotient space $\widehat{Br}(F_n)$ is defined similarly to the monoid and 
group case.
Now consider the following linear mappings:
$\sigma_1: C(F_n) \rightarrow Br(F_n)$,
$\sigma_1(p_w) = \phi_w$ and $\sigma_2: Br(F_n) \rightarrow \widehat{Br}(F_n)$,
$\sigma_2(\phi_w) = \widehat{\phi_w}$,
and their composition $\sigma = \sigma_2 \circ \sigma_1$,
$\sigma: C(F_n) \rightarrow \widehat{Br}(F_n)$. 
\begin{figure}[h!]
  \centering
        \begin{tikzcd}
        C(F_n) \arrow[rd, "\sigma"] \arrow[r, "\sigma_1"] \arrow[d] &
        Br(F_n) \arrow[d, "\sigma_2"]\\
        \widehat{C}(F_n) \arrow[r] & \widehat{Br}(F_n)
        \end{tikzcd}
    \caption{Commutative diagram for $\sigma_1$, $\sigma_2$ and $\sigma$}
    \label{fig:comm}
\end{figure}
\\
We define the kernel space $K_{Br} (F_n)$ as the kernel of $\sigma$. Note that
\begin{equation} \label{eq:kern}
K_{Br} = \Ker \sigma = (\Ker \sigma_1) + \sigma_1^{-1}(\Ker \sigma_2).
\end{equation}

\paragraph{}
In this paper we sometimes refer to $C(M_n)$ and $C(F_n)$ as $C$,
to $\widehat{C}(M_n)$ and $\widehat{C}(F_n)$ as $\widehat{C}$ and
to $K(M_n)$ and $K(F_n)$ as $K$.
Whether we are referring to the monoid case, the group case or
both simultaneously should be clear from context.
We also refer to $Br(F_n)$, 
$\widehat{Br}(F_n)$ and $K_{Br}(F_n)$
as $Br$, $\widehat{Br}$ and $K_{Br}$ respectively.

\paragraph{}
The aim of this paper is describing spanning sets of
$K(M_n), K(F_n)$ and $K_{Br}$, identifying bases of 
$\widehat{C}(M_n), \widehat{C}(F_n)$ and $\widehat{Br}$, and
providing simpler proofs of these descriptions.

\subsection{Motivations}
\paragraph{}
The descriptions of the kernels and the bases were first laid out in
\cite{ggd}, and the Brooks space was first introduced in \cite{brooks}.
A description of its basis may also be found in \cite{ggd}, and
since this basis is infinite, this leads to the fact
that dim $H_b^2(F_n;\mathbb{R}) = \infty$, which Brooks tried to
prove, and which has been proven in \cite{mits} by constructing 
an infinite set of linearly independent classes of Brooks 
quasimorphisms. Larger sets have been since 
provided in \cite{fiziev}
and \cite{grig}. However, these sets were not
bases of $\widehat{Br}(F_n)$, and an explicit basis has been first
identified in \cite{ggd}. The proofs given in this paper
were relatively complicated and lengthy, therefore we tried to find shorter
and simpler alternatives. Moreover, the description of the basis of $\widehat{C}(F_n)$
in \cite{ggd} at the moment of writing this paper contained one extra
element, which rendered the provided set linearly dependent, and we provide an
updated description.
Alas, while quasimorphisms
on free groups have been extensively researched, the same can
not be said about quasimorphisms on free monoids, thus
the aforementioned 
descriptions in the case of $M_n$ may prove useful in
future research.

\paragraph{}
The descriptions of kernels $K$ and bases of $\widehat{C}$
are also used in algorithms that check
whether two counting functions are equivalent. 
One pen-and-paper algorithm for 
this problem has been proposed in \cite{ggd}, and a formal algorithm that is efficient for
free groups of rank $n \geq 2$ and free monoids of rank $n \geq 3$ has been described in \cite{final}.
Such algorithms may be used to explore the action of Out($F_n$)
on $H_b^2(F_n;\mathbb{R})$, namely fixed points, as
has been done in \cite{hase}. Similar applications may be found
in the case of quasimorphisms on $M_n$, 
though this case has not been explored yet.

\subsection{Acknowledgements}
I would like to thank my advisor Alexey Talambutsa
for the assistance in writing this work. I would also like to thank Tobias Hartnick
and Antonius Hase for their reviews and valuable corrections that helped polish the paper. This work was partially supported by the BASIS foundation under grant №22-7-2-32-1.

\subsection{Main results}
\subsubsection{Kernel and basis}
\begin{defn}[from theorem 1.3, \cite{ggd}]
For a word $w \in M_n$ we define the left and right extension
relations as counting functions $l_w$ and $r_w$
of the following form:
$$l_w = p_w - \sum_{s \in S}p_{sw}, \
r_w = p_w - \sum_{s \in S}p_{ws}.$$
\end{defn}

\begin{defn}[from theorem 1.3, \cite{ggd}]
Similarly, for a reduced word $w \in F_n$ we
define the left and right extension relations as follows:
$$l_w = p_w - \sum_{s \in \bar{S} \setminus \{w_1^{-1}\}}p_{sw}, \
r_w = p_w - \sum_{s \in \bar{S} \setminus \{w_{fin}^{-1}\}}p_{ws}.$$
\end{defn}

\begin{defn}[from theorem 1.3, \cite{ggd}, refined]
For a reduced word $w \in F_n$ we define the symmetry relation
$s_w = p_w + p_{w^{-1}}$. We also define the symmetrized
extension relation $se_w = l_w - r_{w^{-1}}$.
\end{defn}

\begin{theorem}[Kernel, theorem 1.3, \cite{ggd}] \label{teor:ker}
    : \\
    1) $K(M_n)$ is spanned by $\{ l_w, r_w \mid w \in M_n\}$. \\
    2) $K(F_n)$ is spanned by $\{ l_w, r_w \mid w \in F_n\}$. \\
    3) $K_{Br}(F_n)$ is spanned by $\{ s_w, se_w \mid w \in F_n\}$.
\end{theorem}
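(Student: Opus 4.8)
The plan is to prove both inclusions of each item, with the easy containment ``$\subseteq K$'' first. For (1) and (2) I would observe that each occurrence of $w$ in $u$ that is \emph{not} at the first position is counted by exactly one $p_{sw}$, so $l_w(u)$ equals the number of occurrences of $w$ at position $1$; that is, $l_w=\mathbf{1}[w\text{ is a prefix of }u]$ and dually $r_w=\mathbf{1}[w\text{ is a suffix of }u]$ (in the group case the forbidden continuations $s=w_1^{-1}$, $s=w_{fin}^{-1}$ are exactly the ones that never occur next to a reduced word, so the same identities hold). Being $\{0,1\}$-valued, $l_w,r_w$ are bounded, hence lie in $K$. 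For (3), since $\sigma_1(s_w)=\phi_w+\phi_{w^{-1}}=0$ we get $s_w\in\Ker\sigma_1\subseteq K_{Br}$, and since $se_w=l_w-r_{w^{-1}}$ is a difference of bounded functions it lies in $K_{Br}$ as well.

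For the reverse inclusion in (1) and (2), take a bounded $f=\sum_w x_w p_w$ of depth $d$; I may assume $d\ge 1$, as $d=0$ forces $f=c\,p_\epsilon=c|u|$ with $c=0$. Write $V$ for the span of the extension relations. The first step is to push every term up to depth $d$ modulo $V$: from $p_w=r_w+\sum_s p_{ws}$ one gets $p_w\equiv\sum p_{wt}\pmod V$ summed over the (reduced, in the group case) extensions $t$ with $|t|=d-|w|$, hence $f\equiv\tilde f:=\sum_{|v|=d}\tilde x_v p_v\pmod V$ where $\tilde x_v=\sum_{w\preceq v}x_w$ is the sum of the coefficients over all prefixes $w$ of $v$. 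Since $V\subseteq K$, the purely depth-$d$ function $\tilde f$ is again bounded.

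The crux is then to interpret $\tilde f$ on the de Bruijn graph $B_{d-1}$ whose vertices are the (reduced) words of length $d-1$ and whose edges are the (reduced) words $v$ of length $d$, the edge $v$ running from its length-$(d-1)$ prefix to its length-$(d-1)$ suffix. For a (cyclically reduced) word $u$, evaluation on powers gives $p_v(u^k)=k\,N_v(u)+O(1)$, where $N_v(u)$ is the number of times the closed walk determined by $u$ traverses the edge $v$; boundedness of $\tilde f$ forces the slope $\sum_{|v|=d}\tilde x_v N_v(u)=0$ for every such $u$. As $u$ ranges over all closed walks the vectors $N(u)$ span the cycle space of $B_{d-1}$, so the edge-function $\tilde x$ annihilates the cycle space and is therefore a coboundary, $\tilde x_v=\phi(\mathrm{suffix}(v))-\phi(\mathrm{prefix}(v))$ for some potential $\phi$ on vertices. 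Finally $\sum_{|w|=d-1}\phi(w)(r_w-l_w)$ is purely of depth $d$ (the depth-$(d-1)$ terms cancel) and its edge-function is exactly this coboundary, so it equals $\tilde f$ by independence of the $p_v$; hence $\tilde f\in V$ and $f\in V$. I expect this middle step to be the main obstacle: making the de Bruijn encoding and the $O(1)$ error precise, proving that closed walks realize enough of the cycle space over the chosen field, and checking in the group case that the graph of reduced words is connected with a rich enough cycle space for $n\ge 2$.

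For (3) I would deduce everything from (2) via the decomposition (\ref{eq:kern}). Writing $\iota$ for the involution $p_w\mapsto p_{w^{-1}}$, so that $\iota(g)(u)=g(u^{-1})$ and $\iota$ preserves boundedness, one has $\sigma_1=\mathrm{id}-\iota$; linear independence of the $p_w$ gives $\Ker\sigma_1=\{f:\iota f=f\}=\operatorname{span}\{s_w\}$. A direct computation yields $\iota(l_w)=r_{w^{-1}}$, hence $\sigma_1(l_w)=se_w$ and likewise $\sigma_1(r_w)=-se_{w^{-1}}$, so by (2) one has $\sigma_1(K(F_n))=\operatorname{span}\{se_w\}$. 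Now for $f\in K_{Br}$ it suffices, by (\ref{eq:kern}), to treat $f\in\sigma_1^{-1}(\Ker\sigma_2)$, where $\beta:=\sigma_1(f)$ is a bounded counting function, so $\beta\in K(F_n)$; since $\beta\in\operatorname{Im}\sigma_1$ we have $\iota\beta=-\beta$ and thus $\beta=\tfrac12\sigma_1(\beta)\in\sigma_1(K(F_n))=\operatorname{span}\{se_w\}$, while $f-\tfrac12\beta\in\Ker\sigma_1=\operatorname{span}\{s_w\}$. Therefore $f\in\operatorname{span}\{s_w,se_w\}$, which together with the easy inclusion completes (3).
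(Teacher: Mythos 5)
Your proposal is correct in substance, but for parts (1) and (2) it takes a genuinely different route from the paper. The paper obtains Theorem \ref{teor:ker} as a byproduct of Theorem \ref{teor:bas}: Section \ref{sec:bas} rewrites every $p_w$, modulo the span $V$ of the extension relations, as a combination of elements of $B$ (resp.\ $\bar B$), and Section 3 shows those elements are linearly independent in $\widehat C$; a bounded $f$ therefore has vanishing basis part and lies in $V$. You instead normalize $f$ to a single depth $d$ modulo $V$ and argue directly that a bounded depth-homogeneous counting function $\sum_{|v|=d}\tilde x_v p_v$ must have $\tilde x$ orthogonal to the cycle space of the de Bruijn graph on (reduced) words of length $d-1$, hence be a coboundary, hence equal to $\sum_w\phi(w)(r_w-l_w)$; this avoids the specific combinatorial basis entirely and is closer in spirit to the original arguments of \cite{ggd}. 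The steps you flag as the main obstacle do all go through: the reduced de Bruijn graph is strongly connected for $n\ge 2$ (insert a buffer letter to concatenate reduced words), directed cycles span the cycle space of a strongly connected digraph over $\Re$, closed walks correspond to cyclically reduced words so the powers $u^k$ are legitimate in $F_n$, and the annihilator of the cycle space is exactly the coboundary space; you should also dispose separately of the edge cases $d\le 1$ and of a possible $p_\epsilon$ term, which are easy. For part (3) both arguments run through the decomposition (\ref{eq:kern}); your use of the involution $\iota$ with $\sigma_1=\mathrm{id}-\iota$, $\iota(l_w)=r_{w^{-1}}$, and $\beta=\tfrac12\sigma_1(\beta)$ for $\beta\in\operatorname{Im}\sigma_1\cap K(F_n)$ is a cleaner, more structural version of the paper's explicit computation and correctly reduces $\sigma_1^{-1}(\Ker\sigma_2)$ to $\operatorname{span}\{se_w\}+\Ker\sigma_1$ using part (2). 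The trade-off: the paper's proof is essentially free once the basis theorems are in place, while yours is self-contained, independent of the choice of basis, and transfers unchanged between the monoid and group cases at the cost of the graph-theoretic bookkeeping.
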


\begin{theorem}[Basis, theorem 1.5, \cite{ggd}, corrected] \label{teor:bas}
    : \\
    1) Let $W$ be the set of all words $w \in M_n$, including $\epsilon$, such that 
    $w_1 \neq a_1$ and $w_{fin} \neq a_1.$ Then the set 
    $B = \{\widehat{p_w} \mid w \in W\}$
    is a basis of $\widehat{C}(M_n)$.\\ \\ 
    2) Similarly, let $\bar{W}'$ be 
    the set of all reduced words $w \in F_n$, also including $\epsilon$, such that 
    $w_1 \neq a_1$, $w_1w_2 \neq a_2a_1^{-1}$,
    $w_{fin} \neq a_1^{-1}$ and 
    $w_{fin-1}w_{fin} \neq a_1a_2^{-1}$.
    Now, we define $\bar{W} = \bar{W}' \cup \{a_1^{-1}\} \ \setminus \ \{a_2\}$.
    Then
    the set $\bar{B} = \{\widehat{p_w} \mid w \in \bar{W}\}$
    is a basis of $\widehat{C}(F_n)$. \\ \\
    3) Let $W_{Br}' = \bar{W} \cup \{a_1, a_2\} \ \setminus \ \{\epsilon\}$.
    Now, let $W_{Br}$ be a subset of $W_{Br}'$ such that for every
    $w \in W_{Br}'$
    it holds that $|\{w, w^{-1}\} \cap W_{Br}| = 1$ 
    (i. e. for every word in $W_{Br}'$ the set
    $W_{Br}$ either contains the word, or its inverse). 
    Then $B_{Br} = 
    \{\widehat{\phi_w} \mid w \in W_{Br}\}$ is a basis of $\widehat{Br}$.
\end{theorem}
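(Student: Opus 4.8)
The three parts share one strategy: each asserts that a specific set of classes is a basis of a quotient $V/K_0$ (namely $C(M_n)/K(M_n)$, $C(F_n)/K(F_n)$, and $Br/K_{Br}$), and Theorem~\ref{teor:ker} already hands me a spanning set of the relevant kernel $K_0$. So for each part I must show two things: that the candidate classes span the quotient, and that they are independent, i.e. that no nontrivial combination of the chosen counting (or Brooks) functions lands in $K_0$. The computational engine throughout is the pair of identities $l_u(x)=\mathbf 1[u\text{ is a prefix of }x]$ and $r_u(x)=\mathbf 1[u\text{ is a suffix of }x]$ (and their reduced analogues), obtained by splitting each occurrence of $u$ according to whether it can be extended on the relevant side. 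These re-prove that $l_u,r_u$ are bounded and give exact control of the relations.

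For Part 1 I would argue as follows. Since a counting function of nonzero depth has a unique representation, the elementary functions $\{p_v\mid v\in M_n\}$ form a basis of $C(M_n)$. Rewriting the left relation as $p_{a_1u}=p_u-\sum_{s\neq a_1}p_{su}$ (valid modulo $K$) eliminates a leading $a_1$, and the analogous right relation eliminates a trailing $a_1$; each application lowers the length or improves a boundary letter without raising the length, so induction on length shows every $\widehat{p_v}$ lies in the span of $\{\widehat{p_w}\mid w\in W\}$, giving spanning. For independence I would produce an explicit basis of $C(M_n)$ refining this: order words by length and then left-to-right lexicographically with the ranking $a_1>\dots>a_n$, so that the distinguished (maximal) monomial of $l_u$ is $p_{a_1u}$ and that of $r_u$ is $p_{ua_1}$. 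The family $\mathcal F=\{p_w\mid w\in W\}\cup\{l_u\mid u\in M_n\}\cup\{r_u\mid u\neq\epsilon,\ u_1\neq a_1\}$ then has pairwise distinct distinguished monomials exhausting $\{p_v\}$ (words starting with $a_1$ come from the $l$'s, words ending but not starting with $a_1$ from the $r$'s, the rest from $W$), so $\mathcal F$ is a basis by triangularity, and the images of $\{p_w\mid w\in W\}$ form a basis of $C(M_n)/\mathrm{span}(\mathcal R)$, where $\mathcal R$ is the relation part of $\mathcal F$.

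It remains to identify $\mathrm{span}(\mathcal R)$ with $K$. The inclusion $\subseteq$ is clear, and for $\supseteq$ Theorem~\ref{teor:ker} reduces matters to showing the surplus generators $r_u$ with $u=\epsilon$ or $u_1=a_1$ already lie in $\mathrm{span}(\mathcal R)$. Here the prefix/suffix identities pay off: $r_\epsilon=l_\epsilon$, and from $\sum_s r_{st}=r_t-(l_t-\sum_s l_{ts})$ one obtains, for $u=a_1t$, an expression of $r_{a_1t}$ through $l$'s, through the $r_{st}$ with $s\neq a_1$ (which lie in $\mathcal R$), and through $r_t$ (handled by induction on length), so a finite induction closes the gap. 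For Part 2 I would run the identical scheme over reduced words, using the reduced relations and the same prefix/suffix identities. The genuinely delicate point — and the source of the error corrected here — is the leading-monomial bookkeeping at the boundary: since one cannot prepend $w_1^{-1}$ to a reduced word, the top admissible extension letter depends on $w_1$, which is exactly why $\bar W'$ needs both a one-letter and a two-letter condition at each end ($w_1\neq a_1$ together with $w_1w_2\neq a_2a_1^{-1}$, and symmetrically), and why a single global dependence among the length-one relations forces the correction $\bar W=\bar W'\cup\{a_1^{-1}\}\setminus\{a_2\}$. Verifying that these conditions make the distinguished monomials biject with all reduced words, and that the surplus relations stay redundant, is the main obstacle.

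Finally, for Part 3 I would leverage Part 2 through $\sigma_1$. Since $\sigma_1(p_w)=\phi_w$ and $\phi_{w^{-1}}=-\phi_w$, the map $\sigma$ factors through $\widehat C(F_n)$ and antisymmetrizes it; concretely I would use the decomposition (\ref{eq:kern}) of $K_{Br}$ together with the spanning set $\{s_w,se_w\}$ from Theorem~\ref{teor:ker}(3). The relations $s_w=p_w+p_{w^{-1}}$ collapse each inverse pair to a single class up to sign, $se_w$ matches left and right extensions, and $\phi_\epsilon=0$; tracking the image of the basis $\bar B$ of $\widehat C(F_n)$ under $\sigma$ shows that $\epsilon$ drops out while the symmetrization reinstates $a_1,a_2$, yielding $W_{Br}'$, and that selecting exactly one of $w,w^{-1}$ from each pair (the condition defining $W_{Br}$) produces a basis $\{\widehat{\phi_w}\mid w\in W_{Br}\}$ of $\widehat{Br}$. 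The only work beyond Part 2 is checking that no further relations survive, i.e. that the selected classes remain independent after antisymmetrization.
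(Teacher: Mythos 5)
Your spanning argument (eliminating leading and trailing $a_1$'s by the extension relations, with induction on how far the word is from $W$) is essentially the paper's, and your prefix/suffix identities are correct. The independence half, however, has a genuine logical gap: you take Theorem \ref{teor:ker} as an input, deduce that $\mathrm{span}(\mathcal R)=K$, and conclude that $B$ descends to a basis of $C/K$. But in this paper Theorem \ref{teor:ker} is a \emph{consequence} of Theorem \ref{teor:bas}: its proof writes an arbitrary bounded $h$ as $\sum x_i p_{w_i} + (\text{combination of }l,r)$ and then needs the linear independence of $B$ in $\widehat C$ to kill the $x_i$. The statement ``every bounded counting function lies in the span of the $l_w,r_w$'' is, modulo the spanning lemma, \emph{equivalent} to the independence you are trying to prove, so invoking it begs the question. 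The paper does the real work directly: it assumes a nontrivial combination $f$ of candidate basis elements is bounded and evaluates $f$ on periodic words $(wa_1^L)^k$, $(wa_1^La_2a_1^{-L})^k$, etc., showing linear growth in $k$. None of that analytic content appears in your proposal, and without it (or an independent proof of Theorem \ref{teor:ker}) the argument does not close.

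There are two further problems. First, your triangularity bookkeeping starts from the false premise that $\{p_v\mid v\in M_n\}$ is a basis of $C(M_n)$: since $p_\epsilon(w)=|w|=\sum_{s\in S}p_s(w)$, one has $p_\epsilon=\sum_s p_s$ and hence $l_\epsilon=r_\epsilon=0$ identically, so your family $\mathcal F$ contains the zero vector and the ``distinguished monomial'' of $l_\epsilon$ collides with that of the dependency $p_\epsilon-\sum_s p_s$. This is repairable (drop $l_\epsilon$, use the paper's Lemma \ref{lemm:cbas}), but as written the count is off by one at $\epsilon$. Second, Parts 2 and 3 are only gestured at: you correctly locate the delicate points (the two-letter boundary conditions $w_1w_2\neq a_2a_1^{-1}$, $w_{fin-1}w_{fin}\neq a_1a_2^{-1}$, the exchange of $a_2$ for $a_1^{-1}$, and the survival of independence after antisymmetrization), but you explicitly defer them as ``the main obstacle'' rather than resolving them. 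In the paper these require the defect metric $\lVert w\rVert=|w|_P+|w|_{P'}$, an explicit kernel element expressing $p_{a_2}$ through $\bar B$, and separate unboundedness tests for $\phi_{a_1}$ and $\phi_{a_2}$; your sketch gives no substitute for any of these.
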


\vspace{\baselineskip}

These two theorems have already been laid down and proven in \cite{ggd}
, with the difference being that $a_2$ was also included in the $\bar{W}$ set
for the case of $F_n$, which is corrected here, and that
the spanning set of $K_{Br}$ provided here is more precise (in the
original paper it was described as $\{ s_w, l_w, r_w \mid w \in F_n\}$).
Our main focus is attempting to provide
shorter and simpler proofs for these facts.

\subsection{Preliminaries}

\begin{lemma} \label{lemm:cbas}
    Elementary counting functions 
    for words of non-zero length form
    a basis of $C(M_n)$ (respectively $C(F_n)$).
\end{lemma}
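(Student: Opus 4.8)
The plan is to verify the two defining properties of a basis separately: that the elementary counting functions of non-zero length span $C(M_n)$ (respectively $C(F_n)$), and that they are linearly independent.

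For spanning, note that by definition $C(M_n)$ is generated by the whole family $\{p_w \mid w \in M_n\}$, so the only generator I need to eliminate is $p_{\epsilon}$. Since $p_{\epsilon}(w) = |w|$ counts every letter of $w$ exactly once, and each letter is an occurrence of exactly one single-letter word, I have $p_{\epsilon} = \sum_{s \in S} p_s$; thus $p_{\epsilon}$ already lies in the span of the length-one functions and the non-empty family still spans. In the group case the identical identity $p_{\epsilon} = \sum_{s \in \bar{S}} p_s$ holds, because each letter of a reduced word is a single occurrence of one of the $2n$ generators or their inverses.

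The heart of the argument is linear independence, which I would prove by a minimal-length evaluation trick rather than by any dimension count or induction on the whole relation. Suppose $\sum_{i=1}^{m} x_i p_{w_i} = 0$ with the $w_i$ distinct non-empty words and, for contradiction, not all $x_i$ zero. Let $\ell$ be the least length of a word occurring with non-zero coefficient, and let $v$ be such a word of length $\ell$, with coefficient $x_v \neq 0$. Evaluating the relation at the single test word $w = v$ kills every term at once: any $w_i$ with $|w_i| > \ell$ cannot be a subword of $v$, because a subword is never longer than its host, so $p_{w_i}(v) = 0$; any $w_i$ with $|w_i| = \ell$ and $w_i \neq v$ also satisfies $p_{w_i}(v) = 0$, since a word of length $\ell$ occurs in $v$ only if it equals $v$; and by minimality no shorter words appear. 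Hence $0 = \bigl(\sum_i x_i p_{w_i}\bigr)(v) = x_v\, p_v(v) = x_v$, contradicting $x_v \neq 0$.

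The group case is word-for-word the same, the only point needing a remark being that a subword of a reduced word is again reduced and has length at most that of the host, so the vanishing $p_{w_i}(v) = 0$ for all $|w_i| \geq \ell$ with $w_i \neq v$ goes through verbatim with every word taken reduced. I do not expect a genuine obstacle here; the single subtlety is recognizing that evaluating at the minimal-length word itself, rather than at some cleverly constructed separating word, already isolates its coefficient, precisely because longer words automatically contribute nothing. Combining spanning with independence yields that the non-empty elementary counting functions form a basis.
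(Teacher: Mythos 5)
Your proposal is correct and follows essentially the same route as the paper: spanning is obtained from the identity $p_{\epsilon} = \sum_{|s|=1} p_s$, and linear independence by evaluating the combination at a shortest word with non-zero coefficient, which isolates that coefficient. The only difference is that you spell out explicitly why equal-length words other than $v$ contribute nothing, which the paper leaves implicit.
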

\begin{proof}
    All elementary counting 
    functions span $C$ by definition, and 
    $p_{\epsilon} = \sum_{|w|=1} p_w$, 
    thus the aforementioned set also
    spans the space. Now, let us take a non-trivial
    linear combination of elementary counting functions for words of non-zero length, 
    $$f = \sum_{i=1}^{m}x_ip_{w_i},$$
    and let $i$ be the index of the
    shortest word $w_i$ in it. Then
    $f(w_i) = x_i$, thus $f \not\equiv 0$.
\end{proof}

\begin{corr}
    Let $W' = F_n \ \setminus \ \{\epsilon\}$. Now let $W$ be a subset of $W'$
    such that for every $w \in W'$ it holds that $|\{w, w^{-1}\} \cap W| = 1$.
    Then $\{\phi_w \mid w \in W\}$ is a basis of $Br(F_n)$.
\end{corr}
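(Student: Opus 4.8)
The plan is to verify the two defining properties of a basis separately — that $\{\phi_w \mid w \in W\}$ spans $Br(F_n)$ and that it is linearly independent — and both will reduce to the single identity $\phi_{w^{-1}} = -\phi_w$ together with Lemma \ref{lemm:cbas}.

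First, for spanning, I would note that by definition $Br(F_n)$ is spanned by $\{\phi_w \mid w \in W'\}$, i.e. by all Brooks quasimorphisms of nonempty reduced words. Since $\phi_{w^{-1}} = p_{w^{-1}} - p_{(w^{-1})^{-1}} = p_{w^{-1}} - p_w = -\phi_w$, the two members of each inverse pair $\{w, w^{-1}\} \subseteq W'$ differ only by sign. Hence the selection condition $|\{w,w^{-1}\} \cap W| = 1$, which keeps exactly one representative of each pair, leaves the span unchanged, and so $\{\phi_w \mid w \in W\}$ still spans $Br(F_n)$.

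Next, for independence, the central point I would establish is that the $2|W|$ words appearing among $\{w \mid w \in W\}$ and $\{w^{-1} \mid w \in W\}$ are pairwise distinct. I would first argue that no nonempty reduced word equals its own inverse: if $|w|$ is odd the middle letter would have to satisfy $w_k = w_k^{-1}$, and if $|w|$ is even the two central letters would have to satisfy $w_{k+1} = w_k^{-1}$, either of which contradicts reducedness; thus $w \neq w^{-1}$. I would then invoke the selection condition to rule out $w^{-1} = v$ for $w, v \in W$, since this would force both $v$ and $v^{-1}$ to lie in $W$. Together with the injectivity of $w \mapsto w$ and $w \mapsto w^{-1}$ on $W$, this yields the claimed distinctness.

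Granting distinctness, any vanishing relation $\sum_{w \in W} x_w \phi_w = 0$ expands to $\sum_{w \in W} x_w p_w - \sum_{w \in W} x_w p_{w^{-1}} = 0$, a linear combination of pairwise distinct elementary counting functions of nonzero length in which $p_w$ carries coefficient $x_w$ and $p_{w^{-1}}$ carries coefficient $-x_w$. By Lemma \ref{lemm:cbas} these functions are linearly independent, so every coefficient vanishes and in particular $x_w = 0$ for all $w \in W$. The only genuinely non-routine step is the distinctness claim, which is exactly where reducedness of the words in $F_n$ is used; everything else is bookkeeping on top of Lemma \ref{lemm:cbas}.
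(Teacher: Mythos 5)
Your proof is correct and is essentially the argument the paper intends: the corollary is stated there without proof as an immediate consequence of Lemma \ref{lemm:cbas}, and your reduction via $\phi_{w^{-1}} = -\phi_w$ together with the pairwise distinctness of the words $w$ and $w^{-1}$ for $w \in W$ is exactly the fleshed-out version of that. One microscopic quibble: in the odd-length case the identity $w_k = w_k^{-1}$ contradicts the fact that no letter of $\bar{S}$ is its own inverse rather than reducedness per se, but this does not affect the argument.
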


\begin{lemma} \label{lem:ext}
    For any word $w \in M_n$ (reduced word $w \in F_n$) the 
    left and right extension relations $l_w$, $r_w$ lie in $K$.
\end{lemma}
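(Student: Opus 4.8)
Recall that $K$ consists precisely of the bounded counting functions: $f \in K$ iff $\widehat{f} = \widehat{0}$, which by definition means $\|f - 0\|_{\infty} < \infty$. So the plan is to evaluate $l_w$ and $r_w$ on an arbitrary (reduced) word $u$ and show that the result always lies in the finite set $\{0,1\}$; boundedness, and hence membership in $K$, then follows at once.

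First I would treat the monoid case. Fix $w \in M_n$ and an arbitrary $u \in M_n$. The key observation is a bijection between the occurrences of $w$ in $u$ that are \emph{not} at the very beginning of $u$ and the occurrences of the words $sw$, $s \in S$, in $u$: an occurrence of $w$ at position $i > 1$ is preceded by the letter $u_{i-1} = s$, giving an occurrence of $sw$ at position $i-1$, and conversely every occurrence of some $sw$ at position $j$ yields an occurrence of $w$ at position $j+1 > 1$. Hence $\sum_{s \in S} p_{sw}(u)$ counts exactly the internal occurrences of $w$, and
$$l_w(u) = p_w(u) - \sum_{s \in S} p_{sw}(u) = \begin{cases} 1 & \text{if } u \text{ begins with } w, \\ 0 & \text{otherwise.} \end{cases}$$
This is bounded, so $l_w \in K(M_n)$; the argument for $r_w$ is identical after replacing ``beginning'' with ``end'' and $sw$ with $ws$. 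The degenerate case $w = \epsilon$ is checked directly, since $l_{\epsilon} = p_{\epsilon} - \sum_{s \in S} p_s = 0$.

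For the free group case the same idea works, but the bookkeeping around reducedness is the part that needs care, and it is the one (mild) obstacle. The subtlety is that the sum defining $l_w$ in $F_n$ ranges over $s \in \bar{S} \setminus \{w_1^{-1}\}$, precisely because $w_1^{-1}w$ is the single non-reduced extension and so $p_{w_1^{-1}w}$ is undefined. I would argue that, since $u$ is reduced, any occurrence of $w$ at a position $i > 1$ is preceded by a letter with $u_{i-1} \neq u_i^{-1} = w_1^{-1}$; thus $u_{i-1} \in \bar{S} \setminus \{w_1^{-1}\}$, and the occurrence of $u_{i-1}w$ it produces is automatically reduced, being a subword of the reduced word $u$. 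This shows the restricted sum again counts exactly the internal occurrences of $w$, so $l_w(u) \in \{0,1\}$ exactly as before, and symmetrically for $r_w$. Verifying that the excluded letter is precisely $w_1^{-1}$, so that neither too many nor too few terms are dropped from the sum, is the one place where the reduced-word hypothesis is genuinely used.
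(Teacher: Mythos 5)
Your proof is correct and follows essentially the same route as the paper's: both identify $l_w(u)$ with the indicator of whether $u$ begins with $w$, via the correspondence between non-initial occurrences of $w$ and occurrences of the extensions $sw$. The only difference is that you spell out the reducedness bookkeeping for $F_n$ (which the paper dismisses as ``virtually identical'') and the degenerate case $w=\epsilon$; both additions are accurate.
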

\begin{proof}
    Let us only provide the proof for $M_n$, since for $F_n$
    it is virtually identical.
    Any counting function is $0$ at $\epsilon$, thus for
    boundedness we only consider arguments of non-zero length.
    For words $w$ and $v$ with $|v| \geq 1$ 
    $$p_w(v) = [w = v_1 \ldots v_{|w|}] + p_w(v_2 \ldots v_{fin})$$
    (the number of subwords of $v$ equal to $w$ that do not start
    with the first letter of $v$ plus $1$ if $w$ 
    is a prefix of $v$).
    Every subword counted in $p_w(v_2 \ldots v_{fin})$ is preceded
    by some letter in $v$, thus 
    $$p_w(v_2 \ldots v_{fin}) = \sum_{s \in S}p_{sw}(v),$$
    therefore $l_w(v) = p_w(v) - p_w(v_2 \ldots v_{fin}) = [w = v_1 \ldots v_{|w|}]$
    may be equal to $1$ or $0$, thus $l_w$ is bounded.
    The boundedness of $r_w$
    may be proven similarly 
    by using the last letter of $v$ instead
    of the first.
\end{proof}

\begin{defn} \label{def:ind}
    We define $\delta_1(w, v) = [w = v_1 \ldots v_{|w|}]$,
    i. e. as $1$ if $v$ starts with $w$, and
    $0$ otherwise. Similarly, we define 
    $\delta_{fin}(w, v) = [w = v_{fin-|w|+1} \ldots v_{fin}]$,
    i. e. as $1$ if $v$ ends with $w$, and $0$ otherwise. 
    It is clear from the proof above that 
    $l_w(v) = \delta_1(w, v)$ and $r_w(v) = \delta_{fin}(w, v)$.
\end{defn}

\begin{lemma}
    For any reduced word $w \in F_n$ the 
    symmetry relation and the symmetrized extension relations
    $s_w$, $se_w$ lie in $K_{Br}$.
\end{lemma}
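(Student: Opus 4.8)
The plan is to reduce both memberships to a single structural observation about $\sigma_1$. Write $J$ for the linear involution of $C(F_n)$ determined by $J(p_v) = p_{v^{-1}}$; then $\phi_w = p_w - p_{w^{-1}} = (\mathrm{id} - J)(p_w)$, so as a map into $Br(F_n) \subseteq C(F_n)$ we have $\sigma_1 = \mathrm{id} - J$. Since $\sigma_2$ merely passes to the equivalence class, for any $f \in C(F_n)$ we have $\sigma(f) = \widehat{\sigma_1(f)}$, and hence $f \in K_{Br}$ exactly when the function $\sigma_1(f) = f - J(f)$ is bounded. (Equivalently, one may argue via the splitting (\ref{eq:kern}): a function lies in $K_{Br}$ as soon as it lies in $\Ker\sigma_1$, or its $\sigma_1$-image lies in the bounded classes $\Ker\sigma_2$.) Both relations will be handled by computing $f - J(f)$.

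For the symmetry relation this is immediate: $J(s_w) = J(p_w) + J(p_{w^{-1}}) = p_{w^{-1}} + p_w = s_w$, so $s_w$ is $J$-invariant and $\sigma_1(s_w) = s_w - J(s_w) = 0$. Thus $s_w \in \Ker\sigma_1 \subseteq K_{Br}$; concretely this is just the cancellation $\phi_w + \phi_{w^{-1}} = 0$.

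For the symmetrized extension relation the heart of the matter is the identity $J(l_w) = r_{w^{-1}}$. To prove it I would apply $J$ termwise to $l_w = p_w - \sum_{s \in \bar{S}\setminus\{w_1^{-1}\}} p_{sw}$, using $(sw)^{-1} = w^{-1}s^{-1}$, and then reindex the sum by $t = s^{-1}$. The main obstacle, and really the only point requiring care, is the bookkeeping of the excluded letters: one must check that under $s \mapsto s^{-1}$ the omitted index $w_1^{-1}$ becomes $w_1$, that $w_1 = (w^{-1})_{fin}^{-1}$ is exactly the letter omitted in the definition of $r_{w^{-1}}$, and that $sw$ being reduced corresponds to $w^{-1}t$ being reduced, so that the two index sets genuinely coincide. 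Granting $J(l_w) = r_{w^{-1}}$ (and hence $J(r_{w^{-1}}) = l_w$, since $J^2 = \mathrm{id}$), we obtain $J(se_w) = J(l_w) - J(r_{w^{-1}}) = r_{w^{-1}} - l_w = -se_w$, so $se_w$ is $J$-anti-invariant and $\sigma_1(se_w) = se_w - J(se_w) = 2\,se_w$. Finally, $se_w = l_w - r_{w^{-1}}$ is a difference of two elements of $K$ by Lemma \ref{lem:ext}, hence bounded; therefore $\sigma_1(se_w) = 2\,se_w$ is bounded, i.e.\ lies in $\Ker\sigma_2$, which gives $se_w \in K_{Br}$.
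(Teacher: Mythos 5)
Your proof is correct and is essentially the paper's argument in a tidier wrapper: the paper also disposes of $s_w$ by the cancellation $\widehat{\phi_w}+\widehat{\phi_{w^{-1}}}=\widehat{0}$, and for $se_w$ it carries out exactly your reindexing $s\mapsto s^{-1}$ (via $\widehat{\phi_{sw}}-\widehat{\phi_{w^{-1}s^{-1}}}=2\widehat{\phi_{sw}}$) to arrive at $\sigma(se_w)=2(\widehat{l_w}-\widehat{r_{w^{-1}}})$ and then cites Lemma \ref{lem:ext}, which is precisely your identity $J(l_w)=r_{w^{-1}}$ followed by the same boundedness appeal. Your key identity does hold --- $(w^{-1})_{fin}^{-1}=w_1$, so the excluded letters match up --- so the only difference is that you package the computation as an eigenspace statement for the involution $J$, which is a clean but equivalent organization.
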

\begin{proof}
    First,
    $$\sigma(s_w) = \widehat{\phi_w} + \widehat{\phi_{w^{-1}}} = 
    \widehat{p_w} - \widehat{p_{w^{-1}}} + \widehat{p_{w^{-1}}} - \widehat{p_w} = \widehat{0},$$
    therefore $s_w \in K_{Br}$. Now,
    $$\sigma(se_w) = \widehat{\phi_w} -
    \sum_{s \in \bar{S} \setminus \{w_1^{-1}\}} \widehat{\phi_{sw}} -
    \Bigl(\widehat{\phi_{w^{-1}}} -
    \sum_{s \in \bar{S} \setminus \{w_1\}} \widehat{\phi_{w^{-1}s}}\Bigr) = $$$$
    (\widehat{\phi_w} - \widehat{\phi_{w^{-1}}}) - 
    \sum_{s \in \bar{S} \setminus \{w_1^{-1}\}} (\widehat{\phi_{sw}} -
    \widehat{\phi_{w^{-1}s^{-1}}}) =
    2\widehat{\phi_{w}} - 2\sum_{s \in \bar{S} \setminus \{w_1^{-1}\}} \widehat{\phi_{sw}} =$$$$
    2\Bigl(\widehat{p_w} - \sum_{s \in \bar{S} \setminus \{w_1^{-1}\}} \widehat{p_{sw}} - 
    \Bigl(\widehat{p_{w^{-1}}} - \sum_{s \in \bar{S} \setminus \{w_1^{-1}\}} 
    \widehat{p_{w^{-1}s^{-1}}}\Bigr)\Bigr) =
    2(\widehat{l_w} - \widehat{r_{w^{-1}}}),$$
    and it follows from Lemma \ref{lem:ext} that 
    $l_w$ and $r_w$ are bounded, thus
    $se_w \in K_{Br}$ too.
\end{proof}

\section{Spanning sets of $\widehat{C}$} \label{sec:bas}
In this section we prove that the sets $B$, $\bar{B}$ and $B_{Br}$ defined in
Theorem \ref{teor:bas} are spanning sets of $\widehat{C}(M_n)$, $\widehat{C}(F_n)$
and $\widehat{Br}(F_n)$ respectively. In the next section we 
will prove that the aforementioned sets are linearly independent.
In this section we write
$p_w$ instead of $\widehat{p_w}$, since we will be
only working with equivalency classes of functions. 

\subsection{Monoid case} \label{subs:mn}
\begin{lemma} \label{lemm:mspan}
    $B$ is a spanning set of $\widehat{C}(M_n)$.
\end{lemma}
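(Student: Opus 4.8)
The plan is to reduce everything to the two preliminary results. By Lemma \ref{lem:ext} each $l_w$ and $r_w$ lies in $K$, so in the quotient they vanish and I may use the rewriting identities $\widehat{p_w} = \sum_{s \in S}\widehat{p_{sw}}$ and $\widehat{p_w} = \sum_{s \in S}\widehat{p_{ws}}$ freely. Solving the first for the term with $s = a_1$ gives
$$\widehat{p_{a_1 w}} = \widehat{p_w} - \sum_{s \in S \setminus \{a_1\}}\widehat{p_{sw}},$$
which rewrites a class whose word begins with $a_1$ in terms of a strictly shorter class plus classes of the same length that no longer begin with $a_1$; the analogous identity from $r_w$ does the same for the last letter. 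Since by Lemma \ref{lemm:cbas} the classes $\widehat{p_v}$ span $\widehat{C}(M_n)$, it will suffice to show that each individual $\widehat{p_v}$ lies in the span of $B$.

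First I would carry out an induction on $|w|$ showing that every $\widehat{p_w}$ lies in the span of $\{\widehat{p_u} \mid u = \epsilon \text{ or } u_1 \neq a_1\}$. The base cases are $\epsilon$, which is already of this form, and the single letter $a_1$, for which the identity coming from $l_\epsilon$ reads $\widehat{p_{a_1}} = \widehat{p_\epsilon} - \sum_{s \neq a_1}\widehat{p_s}$. For the inductive step: if $w$ does not begin with $a_1$ there is nothing to do, and if $w = a_1 v$ the displayed identity expresses $\widehat{p_w}$ through the shorter $\widehat{p_v}$, handled by the induction hypothesis, and the words $sv$ with $s \neq a_1$, which already have an admissible first letter.

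Next I would strip a forbidden trailing $a_1$ by a second induction on length, applied to the words produced above (all of which either equal $\epsilon$ or begin with a letter other than $a_1$), using the right-hand identity $\widehat{p_{va_1}} = \widehat{p_v} - \sum_{s \neq a_1}\widehat{p_{vs}}$. The point to check here is that such a $w = va_1$ with $w_1 \neq a_1$ has $|w| \geq 2$, so $v$ still begins with a non-$a_1$ letter and each $vs$ with $s \neq a_1$ both begins and ends with a letter different from $a_1$, hence lies in $W$. Combining the two passes writes an arbitrary $\widehat{p_w}$ as a combination of classes indexed by $W$, so $B$ spans $\widehat{C}(M_n)$.

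This is essentially a normal-form/rewriting procedure, and the only real difficulty is the bookkeeping: eliminating bad prefixes strictly before bad suffixes so that the suffix pass never reintroduces a forbidden leading $a_1$. The suffix step is where I expect to have to be most careful, together with the separate treatment of $\epsilon$ and of the single letters, since the extension identities degenerate slightly in those cases.
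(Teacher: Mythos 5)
Your proof is correct and follows essentially the same route as the paper: both arguments use the left and right extension relations (which vanish in the quotient by Lemma \ref{lem:ext}) to strip leading and trailing $a_1$'s, reducing every $\widehat{p_w}$ to a combination of classes indexed by $W$. The only difference is bookkeeping — you run two successive length inductions (prefixes first, then suffixes), while the paper writes $w = a_1^k v a_1^m$ and does a single induction on $k+m$ — and your observation that the suffix pass never reintroduces a leading $a_1$ is exactly the point that makes either organization work.
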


\begin{proof}
Let us prove that all the elementary counting functions of non-empty words lie in
the span of $B$. Any word $w \in M_n$ with $|w| \geq 0$ can be represented
as $w = a_1^kva_1^m$ with $|v| \geq 0$, $v_1 \neq a_1$, $v_{fin} \neq a_1$
and $k,m \geq 0$. Let us define $||w|| = k + m$. Note that 
if $w = a_1^d$ the choice of $k$ and $m$ is ambiguous, but
$||a_1^d|| = d$, thus the definition is correct. In
this special case we presume $k = 1$ and $m = ||w|| - 1$.
Now, we prove that for any word $w$ with $|w| \geq 1$ the function class
$p_w$ lies in the span of $B$ inductively by $||w||$.

\paragraph{}
For $||w|| = 0$ it holds that $w_1 \neq a_1$ and $w_{fin} \neq a_1$, thus
$p_w \in B$ by definition.

\paragraph{}
Now let us prove the induction step.
Consider it known that for any word $w$ with $||w|| \leq d$ the function $p_{w}$ belongs to the span of $B$.
Let $w' = a^k_1wa^m_1$ with $||w'|| = k + m = d + 1$.
If $k > 0$, then by the left extension relation
$$p_{w'} = p_{a^k_1wa^m_1}= 
p_{a^{k-1}_1wa^m_1} - \sum_{s \in S \setminus \{a_1\}}p_{sa^{k-1}_1wa^m_1}.$$
If $|w| > 0$, then
$||a^{k-1}_1wa^m_1|| = k - 1 + m = d$ and
 $||sa^{k-1}_1wa^m_1|| = m \leq d$ (since $s \neq a_1$).
Otherwise, $k = 1$, thus $||a^{k-1}_1wa^m_1|| = ||a^m_1|| = m = d$ and
$||sa^{k-1}_1wa^m_1|| = ||sa^m_1|| = m = d$.
Therefore all the terms on the left side lie in the span of $B$ by the 
induction hypothesis, and $p_{w'}$ also lies in the span.

\paragraph{}
Now if $k = 0$, then $|w| > 0$ and $m > 0$. In this case
 $w' = wa^m_1$ and by the right extension relation
 $$p_{w'} = p_{wa^m_1}= 
p_{wa^{m-1}_1} - \sum_{s \in S \setminus \{a_1\}}p_{wa^{m-1}_1s}.$$
In this case $||wa_1^{m-1}|| = m-1 = d$ and $||wa^{m-1}_1s|| = 0$, 
thus $p_{w'}$ lies in the span again.

\paragraph{}
Therefore for all the words $w \in M_n$ with $|w| \geq 1$ the
class $p_w$ lies in the span of $B$, thus by Lemma \ref{lemm:cbas} the 
whole space $\widehat{C}(M_n)$ is spanned by $B$.
\end{proof}

\subsection{Group case} \label{subs:fn}
\begin{lemma} \label{lemm:fspan}
    $\bar{B}$ is a spanning set of $\widehat{C}(F_n)$.
\end{lemma}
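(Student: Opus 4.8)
The plan is to mirror the monoid argument of Lemma~\ref{lemm:mspan}: by Lemma~\ref{lemm:cbas} it suffices to show that $p_w$ lies in the span of $\bar{B}$ for every reduced $w$ with $|w|\geq 1$, and I would do this by an induction that successively removes forbidden prefixes and suffixes using the relations $l_w=0$ and $r_w=0$ in $\widehat{C}(F_n)$ supplied by Lemma~\ref{lem:ext}. Call a reduced word \emph{admissible on the left} if $w_1\neq a_1$ and $w_1w_2\neq a_2a_1^{-1}$, and \emph{admissible on the right} if $w_{fin}\neq a_1^{-1}$ and $w_{fin-1}w_{fin}\neq a_1a_2^{-1}$; the words admissible on both sides are exactly $\bar{W}'$. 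There are two left peeling moves. If $w=a_1u$, the relation $l_u=0$, after isolating the $s=a_1$ summand, gives $p_{a_1u}=p_u-\sum_{s\in\bar{S}\setminus\{a_1,u_1^{-1}\}}p_{su}$; if $w=a_2a_1^{-1}u$, the relation $l_{a_1^{-1}u}=0$, after isolating the $s=a_2$ summand, gives $p_{a_2a_1^{-1}u}=p_{a_1^{-1}u}-\sum_{s\in\bar{S}\setminus\{a_1,a_2\}}p_{sa_1^{-1}u}$. In both moves the reducedness of $w$ forces $u_1\neq a_1^{-1}$, which guarantees that every word on the right-hand side is admissible on the left, while the single shortened term ($p_u$, resp.\ $p_{a_1^{-1}u}$) is strictly shorter. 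The right moves, coming from $r_w=0$, remove forbidden suffixes and are the images of the left moves under $w\mapsto w^{-1}$, since this involution carries the forbidden prefixes $a_1$ and $a_2a_1^{-1}$ to the forbidden suffixes $a_1^{-1}$ and $a_1a_2^{-1}$.

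The essential difference from the monoid case lies in the choice of induction measure. Since the extension relations express $p_w$ through \emph{longer} words $p_{sw}$, an induction on $|w|$ is not available; instead I would use the peel count $\nu(w)=L(w)+R(w)$, where $L(w)$ (resp.\ $R(w)$) is the number of left (resp.\ right) moves needed, peeling the shortened term each time, to reach a word admissible on that side. A left move drops $L$ (to $L-1$ on the shortened term and to $0$ on the equal-length terms) while leaving $R$ unchanged, and symmetrically for a right move, so $\nu$ strictly decreases and the case $\nu(w)=0$ is precisely $w\in\bar{W}'$. The point requiring care is that a left move must leave the right defect untouched: this holds because it alters only the first two letters, so for $|w|\geq 3$ the last two letters, on which $R$ depends, are preserved. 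I expect this bookkeeping, together with the finitely many short words $|w|\leq 2$, to be the main technical obstacle, but these can be checked directly.

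The remaining issue is the length-one discrepancy between $\bar{W}'$ and $\bar{W}$, namely the exchange of $a_2$ for $a_1^{-1}$, and this is exactly where the correction to \cite{ggd} enters. The single letters $a_1$ and $a_1^{-1}$ are dead ends for the peeling, as the moves would need $u=\epsilon$. The letter $a_1^{-1}$ is produced unavoidably by the left move $p_{a_2a_1^{-1}}=p_{a_1^{-1}}-\sum_{s\in\bar{S}\setminus\{a_1,a_2\}}p_{sa_1^{-1}}$, which is the reason $\bar{W}$ contains it. I would then place $p_{a_2}$ in the span of $\bar{B}$ directly: the relation $r_{a_2}=0$ gives $p_{a_2}=p_{a_2a_1^{-1}}+\sum_{s\in\bar{S}\setminus\{a_1^{-1},a_2^{-1}\}}p_{a_2s}$, where the summed words already lie in $\bar{W}'$, and $p_{a_2a_1^{-1}}$ reduces, by the moves above, to $p_{a_1^{-1}}$ plus admissible words, never reproducing $a_2$. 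Finally $a_1$ need not be a basis vector, since the exact identity $p_{\epsilon}=\sum_{s\in\bar{S}}p_s$ yields $p_{a_1}=p_{\epsilon}-\sum_{s\in\bar{S}\setminus\{a_1\}}p_s$, expressing it through $p_{\epsilon}\in\bar{B}$ and the remaining single letters, each of which (including $p_{a_2}$) already lies in the span of $\bar{B}$. Assembling these computations with the main induction shows that the span of $\bar{B}$ contains every $p_w$, and hence equals $\widehat{C}(F_n)$.
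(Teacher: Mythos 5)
Your overall strategy is the same as the paper's: the paper likewise peels the forbidden prefixes $a_1$, $a_2a_1^{-1}$ and suffixes $a_1^{-1}$, $a_1a_2^{-1}$ with extension relations and inducts on a ``defect'' of the word that plays the role of your $\nu(w)=L(w)+R(w)$. The gap is in the one claim your induction rests on, namely that a left move ``drops $L$ \dots while leaving $R$ unchanged.'' That claim is false, and the justification offered (``it alters only the first two letters, so \dots the last two letters, on which $R$ depends, are preserved'') misdescribes $R$: the number of right peels is determined by the entire maximal peelable suffix, not by the last two letters. Concretely, take $w=a_2a_1^{-1}$, for which $L(w)=R(w)=1$, so $\nu(w)=2$. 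Your second left move gives
$$p_{a_2a_1^{-1}}=p_{a_1^{-1}}-\sum_{s\in\bar{S}\setminus\{a_1,a_2\}}p_{sa_1^{-1}},$$
and the summand $s=a_1^{-1}$ produces $p_{a_1^{-2}}$ with $L(a_1^{-2})=0$ but $R(a_1^{-2})=2$, hence $\nu(a_1^{-2})=2=\nu(w)$: no strict decrease. The same failure occurs for every $w=a_2a_1^{-j}$ with $j\geq 1$, where the term $p_{a_1^{-(j+1)}}$ has the same $\nu$ as $w$. The mechanism is that prepending $s=a_1^{-1}$ lengthens the peelable suffix whenever the right-peeling of $a_1^{-1}u$ already consumes the whole word, and the ``only the first two letters change'' argument does not exclude this even for $|w|\geq 3$ (e.g.\ $w=a_2a_1^{-2}$, term $a_1^{-3}$).

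The repair is precisely the case ordering the paper imposes: it peels $a_1$-prefixes and $a_1^{-1}$-suffixes first, and invokes the $a_2a_1^{-1}$-prefix peel only for words with $k'_w=0$, i.e.\ words not ending in $a_1^{-1}$ (symmetrically for the $a_1a_2^{-1}$-suffix peel). Under that restriction no prepended $s\neq a_1,a_2$ can extend the peelable suffix, and the defect genuinely drops on every term. Note that your first left move ($w=a_1u$) is safe, since there the reducedness condition $u_1\neq a_1^{-1}$ excludes exactly the dangerous configurations; the problem is confined to the $a_2a_1^{-1}$ move. Your treatment of the exceptional short words --- obtaining $p_{a_2}$ from $r_{a_2}$, the necessity of $a_1^{-1}\in\bar{W}$, and eliminating $p_{a_1}$ via $p_{\epsilon}=\sum_{s\in\bar{S}}p_s$ --- is sound and arguably more direct than the paper's computation for $p_{a_2}$, but as written the main induction does not close without the ordering constraint.
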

\begin{proof}
The idea of the proof is the same as in the monoid case. However, it
requires a little more legwork since there are more cases.

\paragraph{}
As a preliminary step, let us prove that $p_{a_2}$ lies in the span of $\bar{B}$.
\begin{prop}
The counting function
$$f = \sum_{s\in\bar{S} \setminus \{a_1, a_1^{-1}\}}p_s - 
\sum_{\substack{s_1\in\bar{S}\setminus\{a_1\}, \\
s_2\in\bar{S}\setminus\{a_1^{-1}, s_1^{-1}\}}} p_{s_1s_2}$$
lies in $K(F_n)$.
\end{prop}
\begin{proof}
Let us subtract $\sum_{s\in\bar{S} \setminus \{a_1, a_1^{-1}\}}l_s$ from $f$. 
In this expression chain all the letters $s, s_1, s_2$ are presumed to lie in $\bar{S}$
(thus all the sums are finite) and
all the words are presumed to be reduced so as
not to specify $s\in\bar{S}$, $s_1\in\bar{S}$ and $s_2\in\bar{S} \setminus \{s_1^{-1}\}$
in every sum. 
$$f - \sum_{s \neq a_1, a_1^{-1}}l_s = \sum_{s \neq a_1, a_1^{-1}}p_s
- \sum_{s \neq a_1, a_1^{-1}}l_s -
\sum_{s_1 \neq a_1, s_2\neq a_1^{-1}}p_{s_1s_2}
= $$$$
\sum_{s \neq a_1, a_1^{-1}}p_s - \sum_{s \neq a_1, a_1^{-1}}p_s + 
\sum_{s_2\neq a_1, a_1^{-1}}p_{s_1s_2} -
\sum_{s_1 \neq a_1, s_2\neq a_1^{-1}}p_{s_1s_2} = $$$$
\sum_{s\neq a_1,a_1^{-1}}p_{a_1s} - \sum_{s\neq a_1,a_1^{-1}}p_{sa_1} =
\sum_{s\neq a_1,a_1^{-1}}p_{a_1s} - \sum_{s\neq a_1,a_1^{-1}}p_{sa_1} + p_{a_1a_1} -
p_{a_1a_1} =$$$$ \sum_{s\neq a_1^{-1}}p_{a_1s} - \sum_{s\neq a_1^{-1}}p_{sa_1} =
l_{a_1} - r_{a_1} \sim 0.$$
Therefore $f \sim 0$.
\end{proof} 
All the terms in $f$ except $p_{a_2}$ lie in $\bar{B}$, therefore
$p_{a_2} \sim p_{a_2} - f \in $ span $\bar{B}$.

\paragraph{}
Now, let us introduce a few definitions. Let
$P=\{a_1,\ a_2a_1^{-1}\}$, and $P'=\{a_1^{-1},\
a_1a_2^{-1}\}$. Let us introduce the set
$$F_P = \{ a_1^k(a_2a_1^{-1})^m
\mid k, m \geq 0 \}.$$ Note, that words from $F_P$ are 
reduced as words of $F_n$, meaning that $a_1$ and
$a_1^{-1}$ do not stand next to each other and that
all the other words over the alphabet $P$ are
not reduced in this sense.
Similarly, let $$F_{P'} =
\{ (a_1a_2^{-1})^{m'}a_1^{-k'} \mid k', m' \geq 0 \}.$$
It is also clear that this is precisely the subset
of all the words over the alphabet $P'$ that are
reduced as words of $F_n$.

\paragraph{}
Then, for any word $w = a_1^k(a_2a_1^{-1})^m \in
F_P$ we define $|w|_P$ as $k + m$, and for any
$w' = (a_1a_2^{-1})^{m'}a_1^{-k'} \in F_{P'}$ 
we define $|w'|_{P'}$ as $k' + m'$. 

\paragraph{}
Now,
let us extend the definitions of $|.|_P$ and
$|.|_{P'}$ to all words in $F_n$. 
The word $w$ can be represented in two forms: 
$$w = a_1^k(a_2a_1^{-1})^mv$$ and 
$$w = v'(a_1a_2^{-1})^{m'}a_1^{-k'}$$ with $k, m, k', m' 
\geq 0$. Let us define $|w|_P$ as $max(k + m)$ over
all representations of the first form, and 
$|w|_{P'}$ as $max(k' + m')$ over all 
representations of the second form.
Finally, let us introduce a metric called
defect of a word, defined as follows:
$$||w|| = |w|_P + |w|_{P'} = k + m + k' + m'.$$
Further, we will refer to these values
$k, m, k', m'$ of a word $w$ as 
$k_w, m_w, k'_w, m'_w$.

\paragraph{}
Less formally, this metric can be described as the
number of consecutive patterns from $P$ at the
beginning of the word plus the number of consecutive
patterns from $P'$ at the end of the word. 

\paragraph{}
For
example, $||a^2_1a_2a^{-1}_1a_3a_1a^{-1}_2|| = 4$. 
For brevity, we extend this notation to the 
elementary counting functions, so that 
$||p_{w}|| = ||w||$.

\paragraph{}
It is important to note that it does not always 
hold that $$w = 
a_1^{k_w}(a_2a_1^{-1})^{m_w}v
(a_1a_2^{-1})^{m'_w}a_1^{-k'_w}$$
for some word $v$. The cases where this does not
hold are when a letter $a_1$ participates in both
$a_1^{k_w}$ and $(a_1a_2^{-1})^{m'_w}$:
$$w = a_1^{k_w - 1}(a_1a_2^{-1})^{m'_w}a_1^{-k'_w},$$
and when $a_1^{-1}$ participates in
$(a_2a_1^{-1})^{m_w}$ and $a_1^{-k'_w}$:
$$w = a_1^{k_w}(a_2a_1^{-1})^{m_w}a_1^{-k'_w+1}.$$

\paragraph{}
It is obvious that words of $0$ defect except $a_2$ lie in $\bar{W}$. However, we
have already proven that $p_{a_2}$ lies in the span, thus
all the counting functions for words of $0$ defect lie in the span of $\bar{B}$.
On the other hand,
words of positive defect do not belong to $\bar{W}$, with the only
exception of $a_1^{-1}$. Now we prove
that any elementary counting function $p_w$ with
$||w|| > 0$ and $w \neq a_1^{-1}$ can be reduced to
a linear combination of
elementary counting functions with lesser defects by
right and left extension relations. If we
prove that, then by induction any elementary 
counting function will belong to $\bar{B}$.

\paragraph{}
First, let us consider the case of words of form
$w = a_1v$, meaning $k_w > 0$. 
Take an elementary counting function
$p_{a_1v}$. For this function the left
extension relation can be written as
\begin{equation} \label{eq:ler}
p_{a_1v} = p_{v} -\sum_{s\in\bar{S} \setminus \{a_1,v_1^{-1}\}}p_{sv}.
\end{equation}
First, we prove that $||p_{v}|| < ||p_w|| $. It is clear
that $k_v < k_w$, because $v$ is obtained from $w$ 
by removing the first letter $a_1$ and $m_v = m_w,
\ m'_v \leq m'_w$ and $k'_v = k'_w$,
thus $||p_v|| < ||p_w||$.

\paragraph{}
Now, we prove that $||p_{sv}|| < ||p_w||$ for every 
term
$p_{sv}$ in (\ref{eq:ler}). In order to do that,
we have to calculate
$||sv||$, where $s \neq a_1, v_1^{-1}$. The latter means
that $k_{sv} = 0 < k_w$. Furthermore, 
$m_{sv} = 0 \leq m_w$
even if $s = a_2$, because $v_1 \neq a_1^{-1}$. 
Then, $m'_{sv} \leq m'_w$, because $s \neq a_1$,
so it can not form a new $a_1a_2^{-1}$. Finally,
if $|v| > 0$, we have
$k'_{sv} \leq k'_w$, because $v_1 \neq a_1^{-1}$. 
In the case of $|v| = 0$, i. e. $v = \epsilon$, and $s = a_1^{-1}$ we get
$p_{sv} = p_s = p_{a_1^{-1}} \in \bar{B}$ by definition,
thus this term already lies in the basis.
This means that
$$||p_{sv}|| = k_{sv} + m_{sv} + k'_{sv} + m'_{sv} <
k_w + m_w + k'_w + m'_w = ||p_{w}||,$$
which concludes the proof of the first case.

\paragraph{}
Thus, we have established the proof of the induction
step for elementary counting functions of words with
$k_w > 0$. For counting functions of words
with $k'_w > 0$ ($w = va_1^{-1})$
this can be done symmetrically 
using the right extension relations, with the only 
difference being that if $|v'| = 0$ we already
have $p_w = p_{a_1^{-1}} \in \bar{B}$ and no additional proof
is required.

\paragraph{}
Now, we have to prove the induction step for the words of form  
$w = a_2a_1^{-1}v$ ($k_w = 0, \ m_w > 0$). We also assume that
$|v| > 0$ and $v_{fin} \neq a_1^{-1}$, i. e. that $k'_w = 0$.
Again, by left extension relation
\begin{equation}
 p_{a_2a_1^{-1}v} = p_{a_1^{-1}v} - \sum_{s\in\bar{S} \setminus \{a_2, a_1\}}p_{sa_1^{-1}v},
\end{equation}
and we again have to prove that all the terms on the right
side are either of lesser defect than $n$, or can be 
explicitly reduced to a linear combination of terms with 
lesser defect. 

\paragraph{}
First, let us find the upper bound for 
$||p_{a_1^{-1}v}||$. Let us denote $y = a_1^{-1}v$.
Obviously, $k_y = 0 = k_w$ and $m_y = 0 < m_w$. 
It is also clear that $k'_y \leq k'_w$ and 
$m'_y \leq m'_w$, since $y$ is obtained from $w$
by removing the first letter, thus 
$||p_y|| < ||p_w||$.

\paragraph{}
Now, to the other terms, which have the form 
$sa_1^{-1}v = sy$ with $s \neq a_1, a_2$. 
Due to the restrictions on $s$ we have 
$k_{sy} = m_{sy} = 0$, thus $k_{sy} = k_{w}$ and
$m_{sy} < m_{w}$. It also holds that $m'_{sy} = m'_w$ since $s \neq a_1$
and thus no new $a_1a_2^{-1}$ pattern could be formed. Finally,
$k'_{sy} = 0 = k'_w$, therefore $||p_{sy}|| < ||p_w||$.

\paragraph{}
In the case of $k_w = m_w = k_w' = 0$ and $m_w' > 0$ we can again prove
the induction step symmetrically using right extension relations.

\paragraph{}
Thus we have completed the proof of the induction
step for all word forms, and therefore
in the group case $\bar{B}$ also spans 
$\widehat{C}(F_n)$.
    
\end{proof}

\subsection{Brooks space}
\begin{lemma} \label{lemm:bspan}
    $B_{Br}$ is a spanning set of $\widehat{B}$.
\end{lemma}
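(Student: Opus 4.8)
The plan is to obtain the statement by transporting the already-established spanning property of $\bar{B}$ (Lemma \ref{lemm:fspan}) along the map $\sigma$. The key observation is that $\sigma$ descends to the quotient $\widehat{C}(F_n)$: I claim there is a well-defined surjective linear map $\bar{\sigma} : \widehat{C}(F_n) \to \widehat{Br}(F_n)$ with $\bar{\sigma}(\widehat{p_w}) = \widehat{\phi_w}$, which is exactly the bottom arrow of Figure \ref{fig:comm}. Surjectivity is immediate, since $\widehat{Br}$ is by definition spanned by the classes $\widehat{\phi_w}$, each of which is $\bar{\sigma}(\widehat{p_w})$. Once $\bar{\sigma}$ is available, the image under $\bar{\sigma}$ of a spanning set of $\widehat{C}(F_n)$ is a spanning set of $\widehat{Br}$, and the rest is bookkeeping on the indexing sets.

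To see that $\bar{\sigma}$ is well-defined I must check $K(F_n) \subseteq K_{Br}$, which I expect to be the only genuine obstacle in the argument. Let $f = \sum_i x_i p_{w_i} \in K(F_n)$, i.e. $f$ bounded. Using the bijection between occurrences of $w^{-1}$ in $v$ and occurrences of $w$ in $v^{-1}$, we have $p_{w^{-1}}(v) = p_w(v^{-1})$ for every reduced $v$, hence
\[
\sigma_1(f)(v) = \sum_i x_i\bigl(p_{w_i}(v) - p_{w_i^{-1}}(v)\bigr) = f(v) - f(v^{-1}).
\]
Since $f$ is bounded and $v \mapsto v^{-1}$ is a bijection of $F_n$, the right-hand side is bounded, so $\sigma_1(f)$ is bounded and $\sigma(f) = \sigma_2(\sigma_1(f)) = \widehat{0}$. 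Thus $f \in K_{Br}$, giving $K(F_n) \subseteq K_{Br}$ and the desired factorization.

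It then remains to chase the indexing sets. By Lemma \ref{lemm:fspan} the set $\bar{B} = \{\widehat{p_w} \mid w \in \bar{W}\}$ spans $\widehat{C}(F_n)$, so $\{\widehat{\phi_w} \mid w \in \bar{W}\} = \bar{\sigma}(\bar{B})$ spans $\widehat{Br}$. Because $\phi_{\epsilon} = p_{\epsilon} - p_{\epsilon} = 0$, discarding $\epsilon$ does not change the span, so $\{\widehat{\phi_w} \mid w \in \bar{W} \setminus \{\epsilon\}\}$ already spans $\widehat{Br}$; since $\bar{W} \setminus \{\epsilon\} \subseteq W_{Br}'$, the larger family $\{\widehat{\phi_w} \mid w \in W_{Br}'\}$ spans as well (the extra letters $a_1, a_2$ are harmless for spanning and are only needed for the linear independence half of Theorem \ref{teor:bas}). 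Finally, from $\phi_{w^{-1}} = p_{w^{-1}} - p_w = -\phi_w$ we get $\widehat{\phi_{w^{-1}}} = -\widehat{\phi_w}$, so for every $w \in W_{Br}'$ the class $\widehat{\phi_w}$ lies in the span of $B_{Br}$ (using whichever of $w$, $w^{-1}$ belongs to $W_{Br}$). Hence $B_{Br}$ spans $\widehat{Br}$, as claimed; everything past the factorization step is formal linear algebra combined with the two identities $p_{w^{-1}}(v) = p_w(v^{-1})$ and $\widehat{\phi_{w^{-1}}} = -\widehat{\phi_w}$.
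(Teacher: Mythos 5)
Your proof is correct and follows essentially the same route as the paper: both arguments push the spanning set $\bar{B}$ of $\widehat{C}(F_n)$ forward along $\sigma$ and then replace each $\phi_{w_i}$ by $\pm\phi$ of whichever of $w_i$, $w_i^{-1}$ lies in $W_{Br}$. The only difference is packaging: the paper inverts the decomposition $\widehat{p_w}=\sum_i x_i\widehat{p_{w_i}}$ term by term (asserting $\widehat{p_{w^{-1}}}=\sum_i x_i\widehat{p_{w_i^{-1}}}$ as ``clear''), whereas you justify that same fact once and for all by proving $K(F_n)\subseteq K_{Br}$ from the identity $p_{w^{-1}}(v)=p_w(v^{-1})$, thereby making the descent of $\sigma$ to $\widehat{C}(F_n)$ explicit.
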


\begin{proof}
    Let $\phi_w = p_w - p_{w^{-1}}$ be the Brooks quasimorphism for some word
    $w$. It follows from Lemma \ref{lemm:fspan} that 
    $p_w = \sum_{i=0}^m x_{i} p_{w_i}$ with $p_{w_i} \in \bar{B}$.
    It is clear that $p_{w^{-1}} = \sum_{i=0}^m x_{i} p_{w_i^{-1}}$, 
    thus $\phi_w =  \sum_{i=0}^m x_{i} \phi_{w_i}$.
    Let us presume that $w_i \neq \epsilon$ for every $i$, since 
    $\phi_{\epsilon} \equiv 0$.
    Now for every
    $i$ either $w_i$ or $w_i^{-1}$ lies in $W_{Br}$. In the first case, 
    $\phi_{w_i} \in B_{Br}$. In the second case, $x_i \phi_{w_i} = 
    -x_i \phi_{w_i^{-1}}$, and $\phi_{w_i^{-1}} \in B_{Br}$.
    Therefore if we define $I_w = \{i \mid 0 \leq i \leq m, w_i \in W_{Br}\}$
    and $I_w^{-1} = \{i \mid 0 \leq i \leq m, w_i^{-1} \in W_{Br}\}$, then
    $$\phi_w = \sum_{i\in I_w} x_{i} \phi_{w_i} - 
    \sum_{i\in I_w^{-1}} x_{i} \phi_{w_i^{-1}}$$
    lies in the span,
    and $B_{Br}$ is indeed a spanning set of $\widehat{B}$.
\end{proof}

\section{Linear independence of $B$, $\bar{B}$ and $B_{Br}$}
\subsection{Monoid case}
\begin{lemma} \label{lemm:mind}
    The set $B$ is linearly independent.
\end{lemma}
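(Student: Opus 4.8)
The plan is to show that every finite nontrivial combination $f=\sum_{w\in W'}x_w p_w$ with $W'\subseteq W$ (not all $x_w$ zero) is \emph{unbounded}, so that its class is nonzero in $\widehat{C}(M_n)$. The whole idea exploits the defining property of $W$ — that no word begins or ends with $a_1$ — which lets blocks $a_1^N$ act as impenetrable separators that localize occurrences. Concretely, I would fix $N$ larger than every $|w|$ for $w\in W'$, take an arbitrary test word $u\in M_n$, and evaluate $f$ on $v_k=(a_1^N u)^k a_1^N$.

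The key step is the counting identity $p_w(v_k)=k\,p_w(u)$ for every $w\in W'$ with $w\neq\epsilon$. To prove it, note that since $w_1\neq a_1$ an occurrence of $w$ must begin at a non-$a_1$ position, hence inside one of the $k$ copies of $u$. Since $|w|\le N$ and each copy is followed by the separator $a_1^N$, an occurrence extending beyond its copy cannot reach the next copy, so it would end inside the separator on a letter $a_1$, contradicting $w_{fin}\neq a_1$. Thus every occurrence lies inside a single copy of $u$ and corresponds bijectively to an occurrence of $w$ in $u$; moreover no $w\in W'\setminus\{\epsilon\}$ is a power of $a_1$, so none occurs inside a separator. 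For the possible term $w=\epsilon$ one instead has the affine count $p_\epsilon(v_k)=|v_k|=k(|u|+N)+N$.

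Putting these together, $f(v_k)=k\big(\sum_{w\neq\epsilon}x_w p_w(u)+x_\epsilon(|u|+N)\big)+x_\epsilon N$ is affine in $k$, so boundedness of $f$ forces the coefficient of $k$ to vanish for every $u$ and every large $N$; comparing two values of $N$ yields $x_\epsilon=0$ and then $\sum_{w\in W'\setminus\{\epsilon\}}x_w p_w(u)=0$ for all $u\in M_n$. Hence $\sum_{w\in W'\setminus\{\epsilon\}}x_w p_w$ is the zero function, and Lemma~\ref{lemm:cbas} forces the remaining $x_w=0$, contradicting nontriviality. The main obstacle is exactly the counting identity: ruling out occurrences of $w$ that straddle a separator, which is where all three hypotheses $w_1\neq a_1$, $w_{fin}\neq a_1$ and $N\ge|w|$ are simultaneously used; once that is in hand the remainder is routine bookkeeping.
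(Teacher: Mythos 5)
Your proof is correct and follows essentially the same route as the paper's: both use long blocks $a_1^L$ as separators that localize occurrences of words from $W$ (which neither begin nor end with $a_1$) inside periodic test words, forcing $f$ to grow linearly in the period count. The only difference is cosmetic --- the paper evaluates on $(wa_1^L)^k$ for a shortest support word $w$ and reads off $f=kx_w$ directly, while you evaluate on $(a_1^Nu)^ka_1^N$ for arbitrary $u$ and finish by invoking Lemma~\ref{lemm:cbas}.
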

\begin{proof}
    Let $f = \sum_{i=0}^m x_i p_{w_i} \in C(M_n)$ be a linear combination
    of elementary counting functions 
    with $w_i \in W$ and $x_i \neq 0$. Let us prove that 
    $\widehat{f} \neq \widehat{0}$. For a word $u$ let $x_u$ be the coefficient at
    this word in $f$ ($x_u$ may be zero).
    Let $w$ be one of the the shortest words $w_i$, and let $L$ be the depth of $f$.

    \paragraph{}
    First, we consider the case when $w = \epsilon$. In this case
    $f(a_1^k) = kx_{\epsilon}$, thus $f$ is not bounded.

    \paragraph{}
    Now, if $|w| \geq 1$, let $v = wa_1^L$. Let us consider the word
    $v^k = wa_1^Lwa_1^L\ldots wa_1^L$ for some $k > 0$. Every subword
    of $v^k$ of length $l$ with $|w| \leq l \leq L$ is either equal to $w$,
    or starts or ends with $a_1$. Therefore $f(v^k) = kx_w \neq 0$, 
    and $f$ is not a bounded function.

    \paragraph{}
    Now, since every non-trivial linear combination of functions in
    $B$ is not equal to $\widehat{0}$, it follows that $B$ is linearly independent.
\end{proof}

\subsection{Group case}
\begin{lemma} \label{lemm:find}
    The set $\bar{B}$ is linearly independent.
\end{lemma}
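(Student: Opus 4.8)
The plan is to mirror the monoid argument of Lemma \ref{lemm:mind}. Suppose $f = \sum_{i=1}^{m} x_i p_{w_i}$ with distinct $w_i \in \bar{W}$ is bounded (i.e. $\widehat{f} = \widehat{0}$), and let $L$ be its depth; I will show every $x_i = 0$, which is exactly the asserted independence. I extract the coefficients one at a time by evaluating $f$ on a family of reduced words tailored to each target. Two words must be dealt with first, because they grow on almost everything: since $p_{\epsilon}(v) = |v|$, I evaluate on $a_1^{k}$, where every $p_{w_i}$ with $w_i \neq \epsilon$ vanishes (the nonempty factors of $a_1^{k}$ are the powers $a_1^{j}$, $j \geq 1$, none of which lie in $\bar{W}$), so $f(a_1^{k}) = k\,x_{\epsilon}$ and boundedness forces $x_{\epsilon} = 0$. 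The same evaluation on $a_1^{-k}$, whose only $\bar{W}$-factor is $a_1^{-1}$ itself, gives (using $x_{\epsilon}=0$) that $x_{a_1^{-1}} = 0$. Henceforth the target $w$ is a genuine defect-$0$ word, i.e. $w \in \bar{W}' \setminus \{a_2\}$.

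For such a $w$ I build the periodic word $V^{k}$ with $V = w\,g$, where the filler $g$ shields each copy of $w$ by a long run of $a_1$ on its left and a long run of $a_1^{-1}$ on its right: explicitly $g = a_1^{-M}\,c\,a_1^{M}$ with $M > L$ and a connector $c$ (with $c_1, c_{fin} \notin \{a_1, a_1^{-1}\}$) to be fixed afterwards. The key point is the definition of defect: any factor of $V^{k}$ of length in $[|w|, L]$ that meets a $w$-block but is not exactly that block must begin with $a_1$ or end with $a_1^{-1}$, hence has positive defect and so does not lie in $\bar{W}'$. Thus $w$ occurs once per period, $p_w(V^{k}) = \mu_w k + (\text{bounded})$ with $\mu_w \geq 1$. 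Reducedness of $V^{k}$ holds for every $w \in \bar{W}'$ after a harmless adjustment of the two ends of $g$ in the exceptional cases $w_{fin} = a_1$ (prepend $a_2^{-1}$ to the $a_1^{-M}$ block, producing the forbidden suffix $a_1 a_2^{-1}$) and $w_1 = a_1^{-1}$ (append $a_2$ to the $a_1^{M}$ block, producing the forbidden prefix $a_2 a_1^{-1}$).

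The main obstacle is the factors living inside $g$. Because the forbidden prefixes $\{a_1,\,a_2 a_1^{-1}\}$ and forbidden suffixes $\{a_1^{-1},\,a_1 a_2^{-1}\}$ are inverse to one another, the shield must pass from a run of $a_1^{-1}$ to a run of $a_1$ somewhere inside $g$, and the factors straddling that passage, such as $a_1^{-r} c\, a_1^{t}$ and their sub-factors, begin with $a_1^{-1}$ and end with $a_1$, so they have defect $0$ and are themselves honest elements of $\bar{W}$. These "transition words" occur a nonzero multiple of $k$ times and would in general corrupt the linear-in-$k$ part of $f(V^{k})$. The resolution is that, once $M > L$ makes the fully-crossing ones too long, there remain only finitely many transition words of length $\leq L$ and they depend only on $c$, whereas the words $w_i \neq w$ form a fixed finite set. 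Choosing $c$ long, with initial and final segments avoiding the finitely many prefixes and suffixes these $w_i$ would force (and avoiding the finitely many single letters among them), guarantees that the only elements of $\bar{W} \setminus \{\epsilon, a_1^{-1}\}$ occurring as periodic factors of $V^{k}$ is $w$. Since $x_{\epsilon} = x_{a_1^{-1}} = 0$ are already known, this yields $f(V^{k}) = x_w \mu_w k + (\text{bounded})$, so boundedness of $f$ gives $x_w = 0$. Applying this to every $w$ in the support shows all $x_i = 0$, contradicting nontriviality, so $\bar{B}$ is linearly independent.

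I expect the delicate part to be the bookkeeping of the last paragraph: verifying that the defect-$0$ factors of $V^{k}$ are exactly $w$ together with the transition words (including for the two boundary-adjusted variants of $g$), and that the connector can indeed be chosen to avoid the finite exceptional set. The defect estimates for factors meeting a $w$-block are routine and run parallel to the monoid case.
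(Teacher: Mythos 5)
There is a genuine gap, and it sits exactly where you dropped a hypothesis from the monoid argument you set out to mirror. In Lemma \ref{lemm:mind} the word $w$ is taken to be a \emph{shortest} word of the support, which is what makes ``$f(v^k) = kx_w$'' legitimate: no other support word can hide inside the $w$-block. Your version extracts each $x_w$ independently, and your key claim --- that the only element of $\bar{W}\setminus\{\epsilon,a_1^{-1}\}$ occurring as a periodic factor of $V^k$ is $w$ itself --- is false: any proper factor of $w$ that lies in $\bar{W}$ (for instance most single letters of $w$, or $a_3$ and $a_2a_3$ inside $w=a_2a_3$) occurs at least once per period, so $f(V^k) = \bigl(x_w\mu_w + \sum_u x_u\nu_u\bigr)k + O(1)$ with the sum running over support words that are factors of $w$. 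Your shielding estimate only covers factors of length in $[|w|,L]$ and is silent on lengths below $|w|$. This could be repaired by processing support words in order of increasing length, but that is not what you wrote.

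The second problem is the one you flag as ``delicate'' but do not resolve, and it is not merely bookkeeping: the connector cannot always be chosen as claimed. For $n=2$ the junction between the $a_1^{-M}$ run and $c$ forces the factor $a_1^{-1}c_1$ with $c_1\in\{a_2,a_2^{-1}\}$, and both $a_1^{-1}a_2$ and $a_1^{-1}a_2^{-1}$ lie in $\bar{W}$; if both lie in the support (they have length $2$, so an induction on length does not dispose of them before you need them), every admissible $c$ injects one of their coefficients into the linear term, and targeting those two words first runs into the same obstruction circularly. This is precisely the difficulty created by the forbidden prefixes and suffixes being mutually inverse, and it is why the paper's proof takes a different, more global route: it first proves $f(u)=f(ua_1^L)=f(a_1^{-L}u)=f(a_1^{-L}ua_1^L)=0$ for every \emph{clean} word $u$, using two-block periodic words such as $wa_1^La_2a_1^{-L}$ and $wa_1^Lwa_1^{-L}$ whose pieces have already been shown to contribute $0$ (so no transition word ever needs to be ``avoided''), and only then derives a contradiction from a minimal-length support word by padding it with $a_2$ and $a_2^{-1}$ into a clean word. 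You would need to import both the minimality step and something playing the role of the two-block evaluations for your argument to close.
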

\begin{proof}
    Again, let $f = \sum_{i=0}^m x_i p_{w_i} \in C(F_n)$ be a linear combination
    of elementary counting functions 
    with $w_i \in \bar{W}$ and $x_i \neq 0$. Let us prove that $f$ is
    not bounded. We define $x_u$ and $L$ similarly to the 
    free monoid case.

    \paragraph{}
    First, let $x_{\epsilon} \neq 0$. In this case, again, 
    $f(a_1^k) = kx_{\epsilon}$ and $f$ is not bounded.

    \paragraph{}
    Second, let $x_{\epsilon} = 0$ and $x_{a_1^{-1}} \neq 0$. Then
    $f(a_1^{-k}) = kx_{a_1^{-1}}$ and $f$ is not bounded.

    \paragraph{}
    Now, let $x_{\epsilon} = 0$, $x_{a_1^{-1}} = 0$ and presume that 
    $f$ is bounded.

    \begin{defn}
        A reduced word $w$ with $|w| \geq 1$ is considered clean if
        $w_1 \neq a_1, a_1^{-1}$ and
        $w_{fin} \neq a_1, a_1^{-1}$.
    \end{defn}

    \begin{prop} \label{prop:1}
        Let $w$ be a clean word and let $v = wa_1^L$. Then $f(v^k) = kf(v)$.
        Similarly, if $v' = a_1^{-L}w$, then $f(v'^k) = kf(v')$
    \end{prop}
    \begin{proof}
        $v^k = wa_1^Lwa_1^L\ldots wa_1^L$. Every subword
        of $v^k$ of length $l \leq L$ 
        either entirely lies in a copy of $v = wa_1^L$, or
        starts with $a_1$. Since $x_{a_1^ku} = 0$ for any $u$ and $k > 0$,
        it indeed holds that $f(v^k) = kf(v)$. The proof for $v'$
        is virtually identical, since it is also true that $x_{ua_1^{-k}} = 0$
         for any $u$ and $k > 0$.
    \end{proof}

    Therefore if $f$ is bounded, then $f(wa_1^L) = f(a_1^{-L}w) = 0$ for
    every clean word $w$.

    \paragraph{}
    Now, suppose that there exists a clean word $w$ such that
    $f(a_1^{-L}wa_1^L) = c \neq 0$. Then let us take the word
    $$v = wa_1^La_2a_1^{-L}.$$
    For this word $v^k = wa_1^La_2a_1^{-L}wa_1^La_2a_1^{-L} \ldots wa_1^La_2a_1^{-L}$.
    Since $x_{a_1^k} = 0$ for any $k \in \mathbb{Z}$ and $x_{a_2} = 0$, it is clear that 
    $$f(v^{k}) = f(wa_1^L) + f(a_1^La_2a_1^{-L}) + \sum_{i=1}^{k-1}
    (f(a_1^{-L}wa_1^L) + f(a_1^La_2a_1^{-L})) = $$$$
    f(wa_1^L) + f(a_2) + \sum_{i=1}^{k-1}
    (f(a_1^{-L}wa_1^L) + f(a_2)) = 0+0+\sum_{i=1}^{k-1}
    (c + 0) = c(k-1).$$
    Therefore if $f$ is bounded, then $f(a_1^{-L}wa_1^L) = 0$ for every clean word $w$.

    \paragraph{}
    Now, let $f$ still be bounded, and suppose there exists
    a clean word $w$ such that $f(w) \neq 0$. In this case, if we take the word
    $$v = wa_1^Lwa_1^{-L},$$
    then for this word $v^k = wa_1^Lwa_1^{-L}wa_1^Lwa_1^{-L}\ldots wa_1^Lwa_1^{-L}$,
    and similarly to the previous case
    $$f(v^k) = f(wa_1^{L}) + f(w) + \sum_{i=1}^{k-1}(f(a_1^{-L}wa_1^L) + f(w)) =
    0 + f(w) + \sum_{i=1}^{k-1}(0 + f(w)) = kf(w),$$
    thus $f$ is still unbounded. Therefore since we presume $f$ to be bounded,
    it also holds that
    $f(w) = 0$ for any clean word $w$. In particular, it follows that
    $x_{s} = 0$ for any $s \in \bar{S}^1$.

    \paragraph{}
    Now, let $w$ be a word of smallest length such that $x_w \neq 0$.
    Since we have presumed $f$ to be bounded, $|w| > 1$.
    Now $f(w) = x_w \neq 0$, thus $w_1 = a_1^{-1}$ or
    $w_{fin} = a_1$, or both. 
    If $w_1 = a_1^{-1}$ and $w_{fin} \neq a_1$, then let $w' = a_2w$.
    If $w_1 \neq a_1^{-1}$ and $w_{fin} = a_1$, then let $w' = wa_2^{-1}$.
    Finally, if $w_1 = a_1^{-1}$ and $w_{fin} = a_1$, then let $w' = a_2wa_2^{-1}$.

    \paragraph{}
    Now, let 
    $$\Delta_1^f(w) = \sum_{\substack{v \in F_n, 
    2 \leq |v| \leq |w|}}x_{v}\delta_1(v, w),$$
    and let
    $$\Delta_{fin}^f(w) = \sum_{\substack{v \in F_n, 
    2 \leq |v| \leq |w|}}x_{v}\delta_{fin}(v, w)$$
    with $\delta_1$, $\delta_{fin}$ defined in \ref{def:ind}.
    These functions represent restrictions of $f$ to counting subwords of length at
    least $2$ that $w$ starts with (respectively, ends with). Now note that 
    if $w_1 = a_1^{-1}$, then $w'$ starts with $a_2a_1^{-1}$ and thus $\Delta_1^f(w') = 0$.
    Similarly, if $w_{fin} = a_1$, it holds that $\Delta_{fin}^f(w') = 0$

    \paragraph{}
    In the first case of $w'$ the function value $f(w')$ can be represented as 
     $$f(w') = f(a_2w) = f(w) + f(a_2) + \Delta_1^f(w') = f(w) + 0 + 0 = f(w) \neq 0.$$
    In the second case 
    $$f(w') = f(wa_2^{-1}) = f(w) + f(a_2^{-1}) + \Delta_{fin}^f(w') = f(w) + 0 + 0 = f(w) \neq 0.$$
    Finally, in the third case
    $$f(w') = f(a_2wa_2^{-1}) = f(w) + (f(a_2) + \Delta_1^f(w')) + (f(a_2^{-1}) + \Delta_{fin}^f(w')) - x_{w'} = $$$$
    f(w) + 0 + 0 - 0 = f(w) \neq 0,$$
    with the last term included due to the fact that $w'$ itself is counted in both
    $\Delta_1^f(w')$ and $\Delta_{fin}^f(w')$.
    In all three equations the second
    step follows from the fact that $f(a_2) = f(a_2^{-1}) = 0$ and
    that $x_v = 0$ for any word $v$ that starts with $a_2a_1^{-1}$ or ends with $a_1a_2^{-1}$.
    However, in all three cases $w'$ is a clean word, thus $f(w')$ must be equal to $0$.
    Thus we have arrived at a contradiction and $f$ can not be bounded.
\end{proof}

\subsection{Brooks space}
\begin{lemma} \label{lemm:bind}
    The set $B_{Br}$ is linearly independent.
\end{lemma}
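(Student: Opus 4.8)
The plan is to reduce the statement to the already-established linear independence of $\bar{B}$ (Lemma \ref{lemm:find}). Suppose $\sum_{w \in W_{Br}} x_w \widehat{\phi_w} = \widehat{0}$ in $\widehat{Br}$; by definition this means the representative $g := \sum_{w \in W_{Br}} x_w \phi_w = \sum_{w \in W_{Br}} x_w(p_w - p_{w^{-1}})$ is bounded, i.e. $g \in K(F_n)$. Collecting terms I would write $g = \sum_{u \in W_{Br}'} y_u p_u$, where $y_u = x_u$ for $u \in W_{Br}$ and $y_u = -x_{u^{-1}}$ for $u \in W_{Br}' \setminus W_{Br}$; here I use that both $\bar{W}'$ and hence $W_{Br}' = (\bar{W}' \setminus \{\epsilon\}) \cup \{a_1, a_1^{-1}\}$ are closed under inversion, and that $W_{Br}$ contains exactly one word of each pair $\{u, u^{-1}\}$. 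The crucial structural feature is that these coefficients are antisymmetric, $y_{u^{-1}} = -y_u$, and it then suffices to prove that $g$ bounded forces every $y_u = 0$ (which returns every $x_w = 0$).

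First I would evaluate on powers of $a_1$. The only element of $W_{Br}'$ that is a power of $a_1$ is $a_1$ itself, since every longer power begins with $a_1$ and thus lies outside $\bar{W}' \cup \{a_1, a_1^{-1}\}$; hence $g(a_1^k) = k\, y_{a_1}$, and boundedness gives $y_{a_1} = 0$, whence $y_{a_1^{-1}} = 0$ by antisymmetry. Removing $a_1$ and $a_1^{-1}$ (which do not lie in $\bar{W}'$) from $W_{Br}'$ leaves exactly $\bar{W}' \setminus \{\epsilon\}$, so after this step the support of $g$ sits inside $\bar{W}' \setminus \{\epsilon\}$.

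Next I would push $g$ into the basis $\bar{B}$. The set $\bar{W}' \setminus \{\epsilon\}$ differs from the indexing set $\bar{W} = (\bar{W}' \setminus \{a_2\}) \cup \{a_1^{-1}\}$ only in the letter $a_2$. Using the relation established inside the proof of Lemma \ref{lemm:fspan}, namely that $f' := \sum_{s \neq a_1, a_1^{-1}} p_s - \sum_{s_1 \neq a_1,\, s_2 \neq a_1^{-1},\, s_2 \neq s_1^{-1}} p_{s_1 s_2}$ lies in $K(F_n)$, I subtract $y_{a_2} f'$ from $g$. Since $f'$ contains $p_{a_2}$ with coefficient $+1$ and is otherwise supported on words of $\bar{W}$, the function $g' := g - y_{a_2} f'$ is again bounded (as $f' \sim 0$), is supported entirely on $\bar{W}$, and no longer involves $a_2$. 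Writing $g' = \sum_{u \in \bar{W}} z_u p_u$, Lemma \ref{lemm:find} forces every $z_u = 0$. Backing out the coefficients, for a single letter $s \neq a_1, a_1^{-1}, a_2$ the identity reads $z_s = y_s - y_{a_2}$, so $y_s = y_{a_2}$; taking $s = a_2^{-1}$ and using antisymmetry $y_{a_2^{-1}} = -y_{a_2}$ gives $2 y_{a_2} = 0$, hence $y_{a_2} = 0$ over our characteristic-zero fields. Feeding this back yields $z_u = y_u$ for every remaining $u$ in the support, so all $y_u$ vanish and therefore all $x_w = 0$.

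The main obstacle, and the conceptual reason $B_{Br}$ is a basis while the naive symmetric analogue is not, is the treatment of $a_2$: for $\bar{B}$ the word $a_2$ had to be deleted from the indexing set to break a dependency, whereas here the Brooks antisymmetry $y_{a_2^{-1}} = -y_{a_2}$ automatically kills its coefficient, which is exactly what makes the reduction to Lemma \ref{lemm:find} go through cleanly. The points I would be most careful to verify are the two bookkeeping facts this rests on: that $W_{Br}'$ is closed under inversion (so the antisymmetric rewriting of $g$ is legitimate and $W_{Br}$ is a genuine choice of representatives), and that after the $a_1^k$-evaluation the support of $g$, apart from $a_2$, already lies inside $\bar{W}$.
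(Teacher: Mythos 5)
Your proof is correct, but it resolves the crucial case by a genuinely different mechanism than the paper. Both arguments share the same skeleton: evaluate on $a_1^k$ to kill the $\phi_{a_1}$ coefficient, and invoke Lemma \ref{lemm:find} for the part supported on $\bar{W}$. The divergence is in how the coefficient of $\phi_{a_2}$ is eliminated, which is the only delicate point since $a_2\notin\bar{W}$. The paper handles it combinatorially: assuming $x=0$, $x'\neq 0$, it evaluates $f$ on powers of the two words $a_2a_1^La_2a_1^{-L}$ and $a_2a_1^La_2^{-1}a_1^{-L}$, extracts $f(a_1^{-L}a_2a_1^L)=-f(a_2)$ from the first family and feeds it into the second to exhibit linear growth. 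You instead argue algebraically: after rewriting $g=\sum_{u\in W_{Br}'}y_up_u$ with antisymmetric coefficients $y_{u^{-1}}=-y_u$ (legitimate, since $W_{Br}'$ is closed under inversion and contains no self-inverse words), you subtract $y_{a_2}$ times the kernel element $f'$ from the proposition inside Lemma \ref{lemm:fspan}, land entirely in the span of $\bar{B}$, and then the forced identities $y_s=y_{a_2}$ for single letters $s\neq a_1^{\pm1},a_2$ collide with the antisymmetry $y_{a_2^{-1}}=-y_{a_2}$ to give $y_{a_2}=0$. I checked the bookkeeping you flagged: $W_{Br}'=(\bar{W}'\cup\{a_1,a_1^{-1}\})\setminus\{\epsilon\}$ is indeed inversion-closed, the only power of $a_1^{\pm1}$ in $W_{Br}'$ is $a_1$ itself (besides $a_1^{-1}$, which does not occur in $a_1^k$), and $f'$ is supported on $\{a_2\}\cup\bar{W}$ with coefficient $+1$ at $p_{a_2}$. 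Your route is shorter and makes conceptually explicit why Brooks antisymmetry is exactly what repairs the defect that forced $a_2$ out of $\bar{W}$; the paper's route stays within the uniform ``evaluate on periodic words'' technique of the independence section and does not need to import the kernel relation from the spanning-set argument.
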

\begin{proof}
    Let $f = \sum_{i=0}^m x_i \phi_{w_i} = 
    \sum_{i=0}^m x_i p_{w_i} - \sum_{i=0}^m x_i p_{w_i^{-1}}$
    be some non-trivial linear combination of
    quasimorphisms for words $w_i \in W_{Br}$. It holds that if $w_i \neq a_1, a_1^{-1},
    a_2, a_2^{-1}$, then both $p_{w_i}$ and $p_{w_i^{-1}}$ lie in $\bar{B}$.
    Therefore $f = x\phi_{a_1} + x'\phi_{a_2} + f'$, where $f'$ is a linear combination
    of elements of $\bar{B}$. If $x = x' = 0$, then $f'$ must be non-trivial, thus
    $f$ is not bounded by Lemma \ref{lemm:find}. If $x \neq 0$, then
    $f(a_1^k) = xk$, and thus $f$ is again not bounded.

    \paragraph{}
    Now, let $x = 0$ and $x' \neq 0$,
    i. e. $f = x'\phi_{a_2} + f'$, where $f'$ is either trivial, or not bounded. 
    Similarly to the group case, let
    us presume $f$ to be bounded and take the word
    $$v = a_2a_1^La_2a_1^{-L}.$$
    Similarly to the group case proof,
    $$f(v^k) = f(a_2a_1^L) + f(a_2) +
    (k-1)(f(a_1^{-L}a_2a_1^L) + f(a_2)),$$ therefore
    $f(a_1^{-L}a_2a_1^L) = -f(a_2) = -x' \neq 0$. On the other hand, $f(a_2) = -f(a_2^{-1})$,
    since $f$ consists of quasimorphisms, 
    therefore $f(a_1^{-L}a_2a_1^L) = f(a_2^{-1})$.
    Now let 
    $$v' = a_2a_1^La_2^{-1}a_1^{-L}.$$
    Again, 
    $$f(v'^k) = f(a_2a_1^L) + f(a_2^{-1}) + (k-1)(f(a_1^{-L}a_2a_1^L) + f(a_2^{-1})) =$$$$
    f(a_2a_1^L) + f(a_2^{-1}) + 2(k-1)f(a_2^{-1}),$$
    and since $f(a_2^{-1}) \neq 0$, it follows that $f$ can not be bounded.
    \end{proof}

\section{Proof conclusions}
\begin{proof}[Proof (Theorem \ref{teor:bas})]
    From Lemma \ref{lemm:mspan} it follows that the set $B$ spans $\widehat{C}(M_n)$,
    and from Lemma \ref{lemm:mind} it follows that $B$ is linearly independent,
    thus it is a basis of $\widehat{C}(M_n)$. Similarly, from Lemma 
    \ref{lemm:fspan} it follows that the set $\bar{B}$ spans $\widehat{C}(F_n)$,
    and from Lemma \ref{lemm:find} it follows that $\bar{B}$ is linearly independent,
    thus it is a basis of $\widehat{C}(F_n)$. Finally,
    from Lemma 
    \ref{lemm:bspan} it follows that the set $B_{Br}$ spans $\widehat{Br}(F_n)$,
    and from Lemma \ref{lemm:bind} it follows that $B_{Br}$ is linearly independent,
    thus it is a basis of $\widehat{Br}(F_n)$
\end{proof}

The proof of Theorem \ref{teor:ker} can be easily derived
from the proofs given in Section \ref{sec:bas}.

\begin{proof}[Proof (Theorem \ref{teor:ker})]
    In Section \ref{sec:bas} we have proven that any
    elementary counting function (and therefore any counting function) may be 
    represented as
    $$\sum_{i=0}^m x_i p_{w_i} + \sum_{j=0}^{m'} y_j l_{v_j} +
    \sum_{k=0}^{m''} z_k r_{u_k}$$
    with $p_{w_i} \in B$ (respectively $\bar{B}$) being basis elements, and $l_{v_j}$ and 
    $r_{u_k}$ being left and right extension relations for some
    words. Therefore any two counting functions that lie in
    the same equivalency class with respect to the relation $\sim$ differ by
    a linear combination of left and right extension relations, 
    thus $l_w$ and $r_w$ span $K$, which concludes the proof for the monoid
    and group cases.

    \paragraph{}
    Finally, for the Brooks space let us prove that for $\sigma_1$ and
    $\sigma_2$ from (\ref{eq:kern}) it stands that the functions $s_w$ span $\Ker \sigma_1$
    and the functions $se_w$ span $\sigma_1^{-1}(\Ker \sigma_2)$.
    Since $\sigma_1(p_w) = p_w - p_w^{-1}$, it is clear that $\Ker \sigma_1$ is
    exactly the set of all symmetric counting functions, thus it is
    spanned by $p_w + p_{w^{-1}} = s_w$.
    Next, from the same proofs in Section \ref{sec:bas} it follows that any linear combination
    of Brooks quasimorphisms can be represented in the following form:
    $$\sum_{i=0}^{m'''} x_i' \phi_{w'_i} = \sum_{i=0}^{m'''} x_i' p_{w'_i}
    -\sum_{i=0}^{m'''} x_i' p_{w_i'^{-1}} = $$$$
    \Bigl(\sum_{i=0}^m x_i p_{w_i} + \sum_{j=0}^{m'} y_j l_{v_j} +
    \sum_{k=0}^{m''} z_k r_{u_k}\Bigr) - 
    \Bigl(\sum_{i=0}^m x_i p_{w_i^{-1}} + \sum_{j=0}^{m'} y_j r_{v_j^{-1}} +
    \sum_{k=0}^{m''} z_k l_{u_k^{-1}}\Bigr) = $$$$
    \Bigl(\sum_{i=0}^m x_i p_{w_i} - \sum_{i=0}^m x_i p_{w_i^{-1}}\Bigr) +
    \sum_{j=0}^{m'} y_j (l_{v_j} - r_{v_j^{-1}}) - 
    \sum_{k=0}^{m''} z_k (l_{u_k^{-1}} - r_{u_k}) =$$$$
    \sum_{i=0}^m x_i \phi_{w_i} + \sum_{j=0}^{m'} y_j se_{v_j} - 
    \sum_{k=0}^{m''} z_k se_{u_k^{-1}}$$
    with $\phi_{w_i} \in B_{Br}$ also being basis elements,
    therefore the pre-image of $\Ker \sigma_2$ is 
    spanned by $se_w$, and from (\ref{eq:kern}) it follows that
    $K_{Br}$ is indeed spanned by $s_w$ and $se_w$.
\end{proof}

\bigskip

\noindent\textbf{Author's address:}\\
\noindent Petr Kiyashko \\
\noindent \textsc{HSE University, Faculty of Mathematics,\\
6 Usacheva str., 119048 Moscow, Russia}\\
\texttt{pskiyashko@edu.hse.ru};

\end{document}